\DeclareFontFamily{U}{matha}{\hyphenchar\font45}
\DeclareFontShape{U}{matha}{m}{n}{
      <5> <6> <7> <8> <9> <10> gen * matha
      <10.95> matha10 <12> <14.4> <17.28> <20.74> <24.88> matha12
      }{}
\DeclareSymbolFont{matha}{U}{matha}{m}{n}
\DeclareFontFamily{U}{mathx}{\hyphenchar\font45}
\DeclareFontShape{U}{mathx}{m}{n}{
      <5> <6> <7> <8> <9> <10>
      <10.95> <12> <14.4> <17.28> <20.74> <24.88>
      mathx10
      }{}
\DeclareSymbolFont{mathx}{U}{mathx}{m}{n}
\DeclareMathDelimiter{\vvvert}{0}{matha}{"7E}{mathx}{"17}
\newtheorem{lemma}{Lemma}[section]
\newtheorem{theorem}[lemma]{Theorem}
\newtheorem{remark}[lemma]{Remark}
\newtheorem{prop}[lemma]{Proposition}
\newtheorem{coro}[lemma]{Corollary}
\newtheorem{definition}[lemma]{Definition}
\newtheorem{example}[lemma]{Example}
\newtheorem{assumption}[lemma]{Assumption}
\numberwithin{equation}{section}
\begin{document}

\title[Ergodicity of inhomogeneous Markov processes]
{Ergodicity of inhomogeneous Markov processes under general criteria}

\author{Zhenxin Liu}
\address{Z. Liu : School of Mathematical Sciences,
Dalian University of Technology, Dalian 116024, P. R. China}
\email{zxliu@dlut.edu.cn}

\author{Di Lu}
\address{D. Lu (Corresponding author): School of Mathematical Sciences,
Dalian University of Technology, Dalian 116024, P. R. China}
\email{diluMath@hotmail.com; ludidl@mail.dlut.edu.cn}

\subjclass[2010]{Primary: 60B10, 60J25; Secondary: 47D07, 60J10.}
\keywords{Ergodicity; inhomogeneous Markov process; invariant measure family; Feller transition semigroup.}

\begin{abstract}

This paper is concerned with ergodic properties of inhomogeneous Markov processes. Since the transition probabilities depend on initial times, the existing methods to obtain invariant measures for homogeneous Markov processes are not applicable straightforwardly. We impose some appropriate conditions under which invariant measure families for inhomogeneous Markov processes can be studied. Specifically, the existence of invariant measure families is established by either a generalization of the classical Krylov-Bogolyubov method or a Lyapunov criterion. Moreover, the uniqueness and exponential ergodicity are demonstrated under a contraction assumption of the transition probabilities on a large set. Finally, three examples, including Markov chains, diffusion processes and storage processes, are analyzed to illustrate the practicality of our method.
\end{abstract}

\maketitle

\section{Introduction}

In probability theory and dynamical systems, ergodicity is an important property that describes the long-time behavior of systems.
This property guarantees that mean values in time are equal to mean values in the phase space for real-valued functions.
Ergodic properties for homogeneous Markov processes have been widely studied (see e.g. \cite{BCG,DFG,MT1,MT2,MT3,WYH,ZY2}), in which the transition probabilities and transition operators are not dependent on initial times.
They are extensively developed by viewing the processes on state space $\mathcal{X}$ as dynamical systems on $\mathcal{X}^{\mathbb{R}}$ according to Kolmogorov's extension theorem (see e.g. \cite{DZ1,HM,RBL}).

Ergodicity can be typically obtained by the existence and uniqueness of invariant measures.
In particular, the existence of invariant measures for homogeneous Markov processes can be proved by either the tightness assumption or the existence of a Lyapunov function for Feller transition semigroups (see e.g. \cite{HM}). The uniqueness holds under the irreducible condition for strongly Feller transition semigroups (see e.g. \cite{DZ1}). Furthermore, under the criteria for exponential ergodicity in \cite{MT3}, there exist positive constants $M_{1}<1$, $M_{2}<\infty$ and a measurable function $f$ on $\mathcal{X}$ such that
\begin{equation*}
\|P^{t}(x,\cdot)-\nu\|_{f} \leq M_{2}f(x)M_{1}^{t},\quad t\in\mathbb{R}^{+},\quad x\in\mathcal{X},
\end{equation*}
where the norm
$\|P^{t}(x,\cdot)-\nu\|_{f}:=\sup_{|g|\leq f}|\int_{\mathcal{X}}g(u)P^{t}(x,du)-\int_{\mathcal{X}}g(u)\nu(du)|$, $P^{t}(x,\cdot)$ are the transition probabilities for homogeneous Markov processes and $\nu$ is the unique invariant measure. These results can also be obtained under contraction assumptions (see e.g. \cite{HM1,HM2}).
Besides, the coupling method is a powerful tool to study ergodicity of invariant measures (see e.g. \cite{WD,TL}).

Different from the literature mentioned above, the goal of this paper is to deal with inhomogeneous Markov processes. It has been recognized that such processes driven by time-dependent external forces are of importance in applications. For example, queueing models are used to produce desired approximate results for the appointment system problem in operational research  (see \cite{BW} for more details). Early studies on queueing models focused more on homogeneous cases, but as pointed out by Worthington and Wall \cite{WW}, where inhomogeneous queueing models were studied, the parameters are dependent on time in many real queueing situations. For this or more potential situations, inhomogeneous cases are worthy of thorough analysis.

To the best of our knowledge, there exist a few works concerning ergodic properties of inhomogeneous Markov processes. Among the limited literature, for finite Markov chains, Hajnal \cite{JH1} studied the ergodic behaviour where transition matrices are regular and proved the weak ergodicity in \cite{JH2}, and Iosifescu \cite{MI} studied ergodicity and asymptotic behaviour. For general Markov chains, Bena\"{i}m, Bouguet and Cloez \cite{MFB} studied the asymptotic properties related to those of homogeneous Markov processes. Zeifman and Isaacson \cite{AZDI} studied ergodicity of continuous-time Markov chains but only for finite state spaces. Due to the effect of the time parameter, it is challenging to extend ergodic properties to inhomogeneous Markov processes under general conditions.

In the present paper, we consider inhomogeneous Markov process $\{X(t);t\geq s\}$ for any $s\in\mathbb{R}$ on a Polish space $\mathcal{X}$ with the transition probabilities $P(\tau,x,t,\cdot)$ for any $t\geq\tau\geq s$ and $x\in\mathcal{X}$. Denote the corresponding transition operators on the space of probability measures by $P^{\ast}_{\tau,t}$ given in \eqref{Def-transition}. Since $P(\tau,x,t,\cdot)$ depends on $\tau$, the classical definition of invariant measures is no longer applicable.
We present a definition of invariant measure families for inhomogeneous Markov processes as follows:
a family of probability measures $\{\mu_{s}\}_{s\in\mathbb{R}}$ on $\mathcal{X}$ is said to be an invariant measure family for $\{X(t);t\geq s\}$ if for each $s\in\mathbb{R}$ and $t\geq s$,
\begin{equation*}
P^{\ast}_{s,t}\mu_{s} = \mu_{t}.
\end{equation*}
It is clear that $\{\mu_{s}\}_{s\in\mathbb{R}}$ degenerates into the invariant measure for homogeneous Markov processes when $P^{\ast}_{s,t}$ are homogeneous.

Under general conditions, we show the existence, uniqueness and exponential ergodicity of $\{\mu_{s}\}_{s\in\mathbb{R}}$. More specifically, we generalize the classical Krylov-Bogolyubov method and extend the construction of Lyapunov functions to allow for the existence of time parameter. Then the existence of invariant measure family $\{\mu_{s}\}_{s\in\mathbb{R}}$ is derived based on this generalized method or the Lyapunov criterion for Feller transition semigroups, as shown in Theorems \ref{Th-1existence} and \ref{Th-Lyapunov} respectively. As for the uniqueness, it is established in Theorem \ref{thm-uniquecontinuous} under a contractive condition. Exponential ergodicity is obtained for inhomogeneous Markov chains in Theorem \ref{thmTVdiscrete}, and obtained for Markov processes in Theorem \ref{thmTV}, namely, there exist constants $0<\alpha<1$ and $M=M(x)>0$ such that
\begin{equation*}
\|P(\tau-t,x,\tau,\cdot)-\mu_{\tau}\|_{TV}
\leq
M\alpha^{t},
\end{equation*}
for any $\tau\in\mathbb{R}$, $t\geq 0$ and $\mu_{\tau}$-almost surely $x$, where $\|\cdot\|_{TV}$ is the total variation norm given in Section \ref{sec2}.

The proposed conditions for ergodicity can be easily verified in practice. For example, the contractive condition for diffusion processes on $\mathbb{R}^{n}$ is readily satisfied as their diffusion coefficients have positive lower bounds (see Theorem \ref{Th-nondegenerate} for more details). It is worth stressing out that by utilizing invariant measure families, the proposed conclusions have the potential to engender multiple extensions to asymptotic theory for inhomogeneous Markov processes. The latter could for instance concern the law of large numbers and the central limit theorem. Therefore, the overall aim of this paper is to illustrate some general conditions for obtaining ergodicity, and thus provide some useful tools for further researches on inhomogeneous Markov processes. Except for ergodicity, properties including reversibility and entropy production rate for inhomogeneous Markov processes are also worth investigating, referring to those studied by Ge, Jiang and Qian \cite{GJQ1,GJQ2} for time-periodic Markov chains with denumerable state spaces.

The remainder of this paper is organized as follows. In Section \ref{sec2}, we introduce the necessary notations and definitions of Markov processes and norms, and present properties of uniformly almost periodic functions. We provide general conditions for the existence, uniqueness and exponential ergodicity of invariant measure families in Section \ref{sec4}. In Section \ref{sec5}, we illustrate three applications of our theory, including Markov chains on a countable state space, nondegenerate diffusion processes on $\mathbb{R}^{n}$ and storage processes.

\section{Preliminaries}\label{sec2}
Let us introduce some basics of inhomogeneous Markov processes and recall definitions of supremum norm and total variation norm.

In the inhomogeneous setting, it is natural to consider processes with time sets $[s,+\infty)$ for $s\in\mathbb{R}$ (see e.g. \cite{CL2,CL,DD,DR}).
Let $\mathcal{X}$ be a Polish space with Borel $\sigma$-field $\mathcal B(\mathcal X)$. Denote by $\mathcal{P}(\mathcal{X})$ the space of probability measures on $\mathcal{X}$. 
Given $s\in\mathbb{R}$ and an inhomogeneous Markov process $\{X(t);t\geq s\}$ on $\mathcal{X}$ with the transition probabilities $P(\tau,x,t,\cdot)$ for any $t\geq\tau\geq s$, $x\in \mathcal{X}$, we define the transition operators $P^{\ast}_{\tau,t}$ of $\{X(t);t\geq s\}$ by
\begin{equation}\label{Def-transition}
P^{\ast}_{\tau,t}\nu(A) = \int_{\mathcal{X}}P(\tau,x,t,A)\nu(dx),
\end{equation}
for any probability measure $\nu$ in $\mathcal{P}(\mathcal{X})$ and $A\subset\mathcal{X}$.
Define the transition semigroups $P_{\tau,t}$ of $\{X(t);t\geq s\}$ by
\begin{equation}\label{Def-semigroup}
P_{\tau,t}f(x) = \int_{\mathcal{X}}f(y)P(\tau,x,t,dy),
\end{equation}
for any bounded and measurable function $f:\mathcal{X}\rightarrow\mathbb{R}$, $t\geq\tau$ and $x\in\mathcal{X}$.
Denote by $\nu_{s}$ the initial distribution of $\{X_{t};t\geq s\}$. Let $\mu_{t}:=P^{\ast}_{s,t}\nu_{s}$ for $t\geq s$. It holds that for any $t\geq s$,
\begin{equation}\label{eqt}
P^{\ast}_{s,t}\mu_{s} = \mu_{t}.
\end{equation}
We say the family of probability measures $\{\mu_{t}\}_{t\in\mathbb{R}}$ is an invariant measure family for $\{X_{t};t\geq s\}$ or for $P$ if \eqref{eqt} is satisfied for each $s\in\mathbb{R}$ and $t\geq s$.

For any $\phi:\mathcal{X}\rightarrow\mathbb{R}$, define the supremum norm of $\phi$ by $\|\phi\|_{\infty}=\sup_{x\in\mathcal{X}}|\phi(x)|$. Let $\mu$ be a finite signed measure on $\mathcal{X}$. Denote the total variation norm of $\mu$ (see (13.7) in \cite{MT}) by
\begin{equation*}
\|\mu\|_{TV}
:=
\sup_{\text{measurable\ } \phi, \|\phi\|_{\infty}\leq 1}\bigg|\int_{x\in\mathcal{X}}\phi(x)\mu(dx)\bigg|
 =
\sup_{A\subset\mathcal{X}}\mu(A)-\inf_{A\subset\mathcal{X}}\mu(A).
\end{equation*}
It holds that the difference of two probability measures is a finite signed measure.

In the following, we introduce the definition and properties of uniformly almost periodic functions which are used in Subsections \ref{sec4.2} and \ref{sec4.3}.
Let $(\mathcal{X}_{1},d_{1})$ and $(\mathcal{X}_{2},d_{2})$ be Polish spaces. Denote by $C(\mathbb{R}\times\mathcal{X}_{1},\mathcal{X}_{2})$ the space of all continuous functions $f:\mathbb{R}\times\mathcal{X}_{1}\rightarrow\mathcal{X}_{2}$.

\begin{definition} \rm\label{defalmostperiodic}(See \cite{YT1})
Let $f\in C(\mathbb{R}\times\mathcal{X}_{1},\mathcal{X}_{2})$. $f(t,x)$ is said to be almost periodic in $t$ uniformly for $x\in\mathcal{X}_{1}$, if for each $\epsilon>0$ and any compact set $D\subset\mathcal{X}_{1}$ there exists an $L>0$ such that any interval on $\mathbb{R}$ of length at least $L$ contains some point $s$ satisfying that for all $t\in\mathbb{R}$ and all $x\in D$,
\begin{equation*}
d_{2}(f(t+s,x),f(t,x))<\epsilon.
\end{equation*}
\end{definition}

For any two sequences $\{a_n^{'}\}$, $\{b_n^{'}\}\subset\mathbb R$, we say $\{a_n\}$, $\{b_n\}$ are common subsequences of $\{a_n^{'}\}$, $\{b_n^{'}\}$ if there is a sequence of indices $\{m_n\}$, such that for any $n$,
\begin{equation*}
   a_n=a_{m_n}^{'},\ b_n=b_{m_n}^{'}.
\end{equation*}
\begin{prop} \emph{(See \cite{YT1})}\label{prop-almostperiodic}
For a given continuous function $f\in C(\mathbb{R}\times\mathcal{X}_{1},\mathcal{X}_{2})$, the following statements are equivalent:
\begin{enumerate}
\item $f(t,x)$ is said to be almost periodic in $t$ uniformly for $x\in\mathcal{X}_{1}$;
\item for any sequence $\{\alpha'_{n}\}\subset\mathbb{R}$, there exists subsequence $\{\alpha_{n}\}\subset\{\alpha'_{n}\}$ such that $T_{\alpha}f:=\lim_{n\rightarrow\infty}f(t+\alpha_{n},x)$ exists uniformly on $\mathbb{R}\times K$ for any compact set $K\subset\mathcal{X}_{1}$;
\item for every pair of sequences $\{\alpha'_{n}\},\{\beta'_{n}\}\subset\mathbb{R}$, there exist common subsequences $\{\alpha_{n}\}$ and $\{\beta_{n}\}$ such that for any compact set $K\subset\mathcal{X}_{1}$,
    \begin{equation*}
    T_{\alpha+\beta}f = T_{\alpha}T_{\beta}f,
    \end{equation*}
    uniformly on $\mathbb{R}\times K$.
\end{enumerate}
\end{prop}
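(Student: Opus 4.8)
The plan is to establish the cyclic chain $(\mathrm{i})\Rightarrow(\mathrm{ii})\Rightarrow(\mathrm{iii})\Rightarrow(\mathrm{i})$; since putting $\beta'_{n}\equiv 0$ in $(\mathrm{iii})$ recovers $(\mathrm{ii})$ verbatim, the last arrow reduces to $(\mathrm{ii})\Rightarrow(\mathrm{i})$. Before starting I would isolate two elementary consequences of $(\mathrm{i})$ that get used throughout. First, (a) for every compact $D\subset\mathcal{X}_{1}$ the map $f$ is bounded and uniformly continuous on $\mathbb{R}\times D$ with relatively compact range in $\mathcal{X}_{2}$: given $\epsilon>0$, take $L=L(\epsilon,D)$ from Definition \ref{defalmostperiodic}, note that any $t$ can be shifted by an $\epsilon$-almost period $\sigma$ into $t+\sigma\in[0,L]$, and compare $f$ on $\mathbb{R}\times D$ with $f$ restricted to the compact slab $[0,L]\times D$ (where $f$ is uniformly continuous with compact image). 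Second, (b) a limit that is uniform on $\mathbb{R}\times D$ for every compact $D$ of functions almost periodic in $t$ uniformly on compacts is again of that type, since an $\epsilon/3$-approximation turns $\epsilon/3$-almost periods of the approximant into $\epsilon$-almost periods of the limit.

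The implication $(\mathrm{i})\Rightarrow(\mathrm{ii})$ is the Bochner-type half and is where I expect the real work to lie. I would fix a sequence $\{\alpha'_{n}\}$ and a compact $K\subset\mathcal{X}_{1}$; by (a) the translates $\{f(\cdot+\alpha'_{n},\cdot)\}$ are uniformly bounded, uniformly equicontinuous on $\mathbb{R}\times K$, and valued in a fixed relatively compact subset of $\mathcal{X}_{2}$, so Arzel\`a--Ascoli on each compact slab $[-j,j]\times K$ together with a diagonalization in $j\in\mathbb{N}$ extracts $\{\alpha_{n}\}\subset\{\alpha'_{n}\}$ along which $f(\cdot+\alpha_{n},\cdot)$ converges uniformly on every $[-j,j]\times K$, i.e.\ locally uniformly on $\mathbb{R}\times K$. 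The hard part is promoting this to uniform convergence on all of $\mathbb{R}\times K$, and this is exactly where Definition \ref{defalmostperiodic} is indispensable: given $\epsilon>0$ and $L=L(\epsilon,K)$, for any $t$ I pick an $\epsilon$-almost period $\sigma$ with $t+\sigma\in[0,L]$ and bound
\[
d_{2}\big(f(t+\alpha_{n},x),f(t+\alpha_{m},x)\big)\leq 2\epsilon+d_{2}\big(f(t+\sigma+\alpha_{n},x),f(t+\sigma+\alpha_{m},x)\big),\qquad x\in K,
\]
whose last term is $<\epsilon$ once $n,m$ exceed a threshold depending only on $(\epsilon,K)$, by uniform convergence on the slab $[-L,L]\times K$. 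Hence $\{f(\cdot+\alpha_{n},\cdot)\}$ is uniformly Cauchy on $\mathbb{R}\times K$ and $T_{\alpha}f$ exists there; a further diagonal argument over an increasing sequence of compacts then furnishes a single subsequence valid for every compact $K$.

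For $(\mathrm{ii})\Rightarrow(\mathrm{iii})$, given $\{\alpha'_{n}\},\{\beta'_{n}\}$, I would apply $(\mathrm{ii})$ three times, passing to common subsequences at each stage: first so that $T_{\beta}f=\lim_{n}f(\cdot+\beta_{n},\cdot)$ exists; then, since $T_{\beta}f$ is again almost periodic in $t$ uniformly on compacts by (b) and hence satisfies $(\mathrm{ii})$ by the step just proved, so that $T_{\alpha}(T_{\beta}f)=\lim_{n}(T_{\beta}f)(\cdot+\alpha_{n},\cdot)$ exists; and finally, applying $(\mathrm{ii})$ to $f$ along the sequence $\{\alpha'_{n}+\beta'_{n}\}$, so that $T_{\alpha+\beta}f=\lim_{n}f(\cdot+\alpha_{n}+\beta_{n},\cdot)$ exists. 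Equality of the two limits would then follow from the $3\epsilon$ estimate
\begin{align*}
d_{2}\big(T_{\alpha+\beta}f(t,x),T_{\alpha}T_{\beta}f(t,x)\big)
&\leq d_{2}\big(T_{\alpha+\beta}f(t,x),f(t+\alpha_{n}+\beta_{n},x)\big)\\
&\quad+d_{2}\big(f(t+\alpha_{n}+\beta_{n},x),T_{\beta}f(t+\alpha_{n},x)\big)\\
&\quad+d_{2}\big(T_{\beta}f(t+\alpha_{n},x),T_{\alpha}T_{\beta}f(t,x)\big),
\end{align*}
whose first and third terms vanish as $n\to\infty$ by construction, and whose middle term is controlled by noting that $\{f(\cdot+\beta_{n},\cdot)\}$ being uniformly Cauchy on $\mathbb{R}\times K$ gives, on letting the other index tend to infinity, $\sup_{s\in\mathbb{R},\,y\in K}d_{2}(f(s+\beta_{n},y),T_{\beta}f(s,y))\leq\epsilon$ for all large $n$, after which one specializes $s=t+\alpha_{n}$, $y=x$. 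Letting $n\to\infty$ and then $\epsilon\to0$ gives $T_{\alpha+\beta}f=T_{\alpha}T_{\beta}f$ on $\mathbb{R}\times K$.

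Finally, $(\mathrm{ii})\Rightarrow(\mathrm{i})$ I would prove by contradiction: if $(\mathrm{i})$ fails, there are $\epsilon_{0}>0$ and a compact $D$ for which the set $E$ of $\epsilon_{0}$-almost periods relative to $D$ is not relatively dense, i.e.\ avoids intervals of arbitrary length; using this together with $0\in E$ and $E=-E$, I would inductively construct $\alpha_{1},\alpha_{2},\dots$ so that at stage $n$ the point $\alpha_{n}$ sits inside a long $E$-free interval positioned so that all of $\alpha_{n}-\alpha_{1},\dots,\alpha_{n}-\alpha_{n-1}$ fall in it, whence $\sup_{t\in\mathbb{R},\,x\in D}d_{2}(f(t+\alpha_{n},x),f(t+\alpha_{m},x))\geq\epsilon_{0}$ for all $n\neq m$, so no subsequence of $\{f(\cdot+\alpha_{n},\cdot)\}$ can converge uniformly on $\mathbb{R}\times D$ --- contradicting $(\mathrm{ii})$. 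Combined with the trivial implication $(\mathrm{iii})\Rightarrow(\mathrm{ii})$, this closes the chain. Apart from the local-to-global upgrade in $(\mathrm{i})\Rightarrow(\mathrm{ii})$, which is the one genuinely delicate point, every step is routine bookkeeping with triangle inequalities, Arzel\`a--Ascoli, and diagonal subsequences.
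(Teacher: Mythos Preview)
The paper does not prove this proposition; it is quoted from Yoshizawa \cite{YT1} without argument, so there is no ``paper's own proof'' to compare against. Your outline is the standard Bochner-type proof and is essentially correct, but there is one logical slip worth flagging.

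In your step $(\mathrm{ii})\Rightarrow(\mathrm{iii})$ you justify that $T_{\beta}f$ again satisfies $(\mathrm{ii})$ by invoking fact (b) together with the already-proved implication $(\mathrm{i})\Rightarrow(\mathrm{ii})$. But (b) requires the approximants $f(\cdot+\beta_{n},\cdot)$ to be almost periodic, i.e.\ requires $(\mathrm{i})$ for $f$, and under the bare hypothesis $(\mathrm{ii})$ you have not yet established $(\mathrm{i})$; as written this is circular. Two easy repairs are available. Either reorder the chain and prove $(\mathrm{ii})\Rightarrow(\mathrm{i})$ \emph{before} $(\mathrm{ii})\Rightarrow(\mathrm{iii})$, so that at that stage you already know $(\mathrm{i})\Leftrightarrow(\mathrm{ii})$ and may freely use (b); or, more economically, drop the separate extraction of $T_{\alpha}(T_{\beta}f)$ altogether. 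Indeed, once $T_{\beta}f$ and $T_{\alpha+\beta}f$ exist along common subsequences, the two-term estimate
\[
d_{2}\big(T_{\beta}f(t+\alpha_{n},x),\,T_{\alpha+\beta}f(t,x)\big)\leq d_{2}\big(T_{\beta}f(t+\alpha_{n},x),\,f(t+\alpha_{n}+\beta_{n},x)\big)+d_{2}\big(f(t+\alpha_{n}+\beta_{n},x),\,T_{\alpha+\beta}f(t,x)\big)
\]
(the first term controlled exactly as in your ``middle term'' argument, the second by the definition of $T_{\alpha+\beta}f$) already shows directly that $T_{\beta}f(\cdot+\alpha_{n},\cdot)\to T_{\alpha+\beta}f$ uniformly on $\mathbb{R}\times K$, which is precisely the statement $T_{\alpha}T_{\beta}f=T_{\alpha+\beta}f$. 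With either fix the argument goes through; the remaining steps $(\mathrm{i})\Rightarrow(\mathrm{ii})$ and $(\mathrm{ii})\Rightarrow(\mathrm{i})$ are handled correctly.
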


\section{Invariant measure families for inhomogeneous Markov processes}\label{sec4}

In this section we study the existence, uniqueness and exponential ergodicity of invariant measure families for inhomogeneous Markov processes.
Let $\mathcal{X}$ be a Polish space with Borel $\sigma$-field $\mathcal B(\mathcal X)$.
We consider inhomogeneous Markov process $\{X(t);t\geq s\}$ for any $s\in\mathbb{R}$ on $\mathcal{X}$ with the transition probabilities $P(\tau,x,t,\cdot)$ for $t\geq\tau\geq s$, $x\in \mathcal{X}$, the transition operators $P^{\ast}_{\tau,t}$ given in \eqref{Def-transition} and the transition semigroups $P_{\tau,t}$ given in \eqref{Def-semigroup}.

\subsection{Criteria for the existence of invariant measure families for $P$}

In this subsection, two criteria for the existence of invariant measure families are established.
Firstly, we generalize the Krylov-Bogoliubov theorem in \cite{DZ1} to obtain the existence of invariant measure families for $P$. Notice that it is always possible to restrict oneself to the case of discrete time for obtaining the existence of invariant measure families for $P$ by the following theorem.

\begin{definition} \rm
We say the transition semigroup $P_{n,n+m},n\in\mathbb{Z},m\in\mathbb{N}$ is Feller if $P_{n,n+m}\phi$ is continuous and bounded whenever $\phi$ is continuous and bounded.
\end{definition}

\begin{theorem}\label{Th-1existence}
Let $P_{n,n+m},n\in\mathbb{Z},m\in\mathbb{N}$ be Feller. If there exists $\mu\in\mathcal{P}(\mathcal{X})$ such that the set $\{\int_{\mathcal{X}}P(n-s,x,n,\cdot)\mu(dx)\}_{s\in\mathbb{N}^{+}}\subset\mathcal{P}(\mathcal{X})$ is tight for any $n\in\mathbb{Z}$, then there exists an invariant measure family for $P$.
\end{theorem}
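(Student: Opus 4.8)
The plan is to run a version of the Krylov--Bogolyubov construction \emph{simultaneously at every integer present time}, using the Chapman--Kolmogorov equations to link the Cesàro averages computed at different times, and then to propagate the resulting discrete family to all of $\mathbb{R}$. Fix $\mu$ as in the hypothesis and, for $n\in\mathbb{Z}$ and $s\in\mathbb{N}^{+}$, write $\nu_{n}^{s}:=P^{\ast}_{n-s,n}\mu=\int_{\mathcal{X}}P(n-s,x,n,\cdot)\,\mu(dx)$, so that $\{\nu_{n}^{s}\}_{s\in\mathbb{N}^{+}}$ is tight for each $n$ by assumption. The key algebraic fact, immediate from Chapman--Kolmogorov, is
\begin{equation*}
P^{\ast}_{n,n+m}\nu_{n}^{s}=\nu_{n+m}^{s+m},\qquad n\in\mathbb{Z},\ m\in\mathbb{N},\ s\in\mathbb{N}^{+},
\end{equation*}
and I would introduce the Cesàro averages $R_{N}^{n}:=\frac{1}{N}\sum_{s=1}^{N}\nu_{n}^{s}\in\mathcal{P}(\mathcal{X})$ for $N\in\mathbb{N}^{+}$, $n\in\mathbb{Z}$.

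The first step is a compactness argument. For each fixed $n$ the family $\{R_{N}^{n}\}_{N\geq 1}$ is tight, since a compact set carrying all $\nu_{n}^{s}$ up to $\varepsilon$ carries every $R_{N}^{n}$ up to $\varepsilon$; as $\mathcal{X}$ is Polish, Prokhorov's theorem makes each $\{R_{N}^{n}\}_{N}$ relatively compact for weak convergence. Since $\mathbb{Z}$ is countable, a diagonal extraction then produces one sequence $N_{k}\uparrow\infty$ along which $R_{N_{k}}^{n}$ converges weakly to some $\mu_{n}\in\mathcal{P}(\mathcal{X})$ \emph{simultaneously for every} $n\in\mathbb{Z}$.

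Next I would pass to the limit. Averaging the displayed identity over $s=1,\dots,N$ and telescoping gives
\begin{equation*}
P^{\ast}_{n,n+m}R_{N}^{n}-R_{N}^{n+m}=\frac{1}{N}\Bigl(\sum_{j=N+1}^{N+m}\nu_{n+m}^{j}-\sum_{j=1}^{m}\nu_{n+m}^{j}\Bigr),
\end{equation*}
whose total variation norm is $O(m/N)$ and hence vanishes as $N\to\infty$ for each fixed $m$. On the other hand, the Feller hypothesis makes each $P^{\ast}_{n,n+m}$ continuous for weak convergence, because for bounded continuous $\phi$ one has $\int\phi\,d(P^{\ast}_{n,n+m}\rho)=\int (P_{n,n+m}\phi)\,d\rho$ with $P_{n,n+m}\phi$ again bounded continuous. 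Hence along $N_{k}$ we get $P^{\ast}_{n,n+m}R_{N_{k}}^{n}\to P^{\ast}_{n,n+m}\mu_{n}$ weakly, while the vanishing error term forces $P^{\ast}_{n,n+m}R_{N_{k}}^{n}$ to have the same weak limit as $R_{N_{k}}^{n+m}$, namely $\mu_{n+m}$; therefore $P^{\ast}_{n,n+m}\mu_{n}=\mu_{n+m}$ for all $n\in\mathbb{Z}$, $m\in\mathbb{N}$, i.e.\ $\{\mu_{n}\}_{n\in\mathbb{Z}}$ is an invariant measure family along integer times. Finally I would extend to $\mathbb{R}$ by setting $\mu_{t}:=P^{\ast}_{\lfloor t\rfloor,t}\mu_{\lfloor t\rfloor}$ and checking, via Chapman--Kolmogorov together with $P^{\ast}_{k,\ell}\mu_{k}=\mu_{\ell}$ for integers $k\leq\ell$, that $P^{\ast}_{s,t}\mu_{s}=\mu_{t}$ for all real $s\leq t$: with $k=\lfloor s\rfloor$, $\ell=\lfloor t\rfloor$ one has $P^{\ast}_{s,t}\mu_{s}=P^{\ast}_{s,t}P^{\ast}_{k,s}\mu_{k}=P^{\ast}_{k,t}\mu_{k}=P^{\ast}_{\ell,t}P^{\ast}_{k,\ell}\mu_{k}=P^{\ast}_{\ell,t}\mu_{\ell}=\mu_{t}$.

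The step I expect to be the crux — and the one genuinely absent from the classical homogeneous Krylov--Bogolyubov statement — is producing the limiting measures $\mu_{n}$ for \emph{all} $n\in\mathbb{Z}$ at once, including negative $n$, where there is no single forward-pushed seed measure to average; this is exactly what the Chapman--Kolmogorov identity for the $\nu_{n}^{s}$, combined with the diagonal extraction over the countable index set $\mathbb{Z}$, delivers. The telescoping-error estimate, the weak continuity of $P^{\ast}$, and the passage from integer to real times are then routine.
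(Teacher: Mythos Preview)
Your proposal is correct and follows essentially the same approach as the paper: Ces\`aro averages of the measures $P^{\ast}_{n-s,n}\mu$, a diagonal extraction over $\mathbb{Z}$ to obtain simultaneous weak limits $\mu_n$, a telescoping error estimate combined with the Feller property to show $P^{\ast}_{n,n+m}\mu_n=\mu_{n+m}$, and then extension from integer to real times via Chapman--Kolmogorov. The only cosmetic differences are that the paper first establishes the one-step relation $P^{\ast}_{n,n+1}\mu_n=\mu_{n+1}$ and then iterates, and that it checks well-definedness of $\nu_t$ for an arbitrary integer $n\leq t$ rather than fixing $n=\lfloor t\rfloor$ as you do; your choices are if anything slightly cleaner.
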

\begin{proof}
Let $\{\mu_{n,s}\}$ be the sequence of probability measures defined by
\begin{equation*}
\mu_{n,s} = \frac{1}{s}\sum_{i=0}^{s-1}\int_{\mathcal{X}}P(n-i,x,n,\cdot)\mu(dx).
\end{equation*}
Since $\{\int_{\mathcal{X}}P(n-s,x,n,\cdot)\mu(dx)\}_{s\in\mathbb{N}^{+}}$ is tight, it holds that $\{\mu_{n,s}\}_{s\in\mathbb{N}^{+}}$ is also tight.
Then there exists a sequence $\{T_{N}\}_{N\in\mathbb{N}}$ such that the weak limit $\mu_{n}:=\lim_{N\rightarrow\infty}\mu_{n,T_{N}}$ exists for all $n\in\mathbb{Z}$ by the Prokhorov theorem and a diagonal argument. 

Let $\phi$ be any continuous and bounded function. For every $\epsilon>0$, there exists $K\in\mathbb{N}^{+}$ such that for any $N\geq K$,
\begin{align*}
&|\int_{\mathcal{X}}\phi(x)P_{n,n+1}^{\ast}\mu_{n,T_{N}}(dx)-\int_{\mathcal{X}}\phi(x)\mu_{n+1,T_{N}}(dx)|\\
=\:&
\bigg|\frac{1}{T_{N}}\int_{\mathcal{X}}\phi(x)\sum_{i=0}^{T_{N}-1}\int_{\mathcal{X}}(P(n-i,y,n+1,dx)-P(n+1-i,y,n+1,dx))\mu(dy)\bigg|\\
=\:&
\bigg|\frac{1}{T_{N}}\int_{\mathcal{X}}\phi(x)\int_{\mathcal{X}}(P(n-T_{N},y,n+1,dx)-P(n,y,n+1,dx))\mu(dy)\bigg|\\
\leq\:&
\frac{\epsilon}{3}.
\end{align*}
Reasoning as above, and using the facts that for any $n\in\mathbb{Z}$, $P_{n,n+1}$ is Feller and $\mu_{n}$ is the weak limit of $\mu_{n,T_{N}}$, we have
\begin{align*}
&\bigg|\int_{\mathcal{X}}\phi(x)P_{n,n+1}^{\ast}\mu_{n}(dx)-\int_{\mathcal{X}}\phi(x)\mu_{n+1}(dx)\bigg|\\
\leq \:& \frac{\epsilon}{3}+\bigg|\int_{\mathcal{X}}\phi(x)P_{n,n+1}^{\ast}\mu_{n}(dx)-\int_{\mathcal{X}}\phi(x)P_{n,n+1}^{\ast}\mu_{n,T_{N}}(dx)\bigg|+\bigg|\int_{\mathcal{X}}\phi(x)\mu_{n+1,T_{N}}(dx)\\
&
-\int_{\mathcal{X}}\phi(x)\mu_{n+1}(dx)\bigg|\\
\leq\:& 
\epsilon.
\end{align*}
Thus for any $m>n$, $m,n\in\mathbb{Z}$, using the fact that for any $1\leq i\leq m-n$, function $\int_{\mathcal X}\phi(x)P(n+i,\cdot,m,dx)$ is continuous and bounded, and similar arguments as above, we have
\begin{align*}
&\bigg|\int_{\mathcal{X}}\phi(x)P_{n,m}^{\ast}\mu_{n}(dx)-\int_{\mathcal{X}}\phi(x)\mu_{m}(dx)\bigg|\\
\leq\:&
\bigg|\int_{\mathcal{X}}\phi(x)P_{n,m}^{\ast}\mu_{n}(dx)-\int_{\mathcal{X}}\phi(x)P_{n+1,m}^{\ast}\mu_{n+1}(dx)\bigg|+\cdots+\bigg|\int_{\mathcal{X}}\phi(x)\mu_{m}(dx)\\
&
-\int_{\mathcal{X}}\phi(x)P_{m-1,m}^{\ast}\mu_{m-1}(dx)\bigg|\\
\leq\:&
(m-n)\epsilon.
\end{align*}
Since $\epsilon$ and $\phi$ are arbitrary, it holds that $P_{n,m}^{\ast}\mu_{n}=\mu_{m}$.

Notice that for any $t\in\mathbb{R}$, there exists $n\in\mathbb{Z}$ such that $t\geq n$. Define the sequence of probability measures $\{\nu_{t}\}_{t\in\mathbb{R}}$ by $\nu_{t}=\int_{\mathcal{X}}P(n,x,t,\cdot)\mu_{n}(dx)$. If we set $\tilde{\nu}_{t}=\int_{\mathcal{X}}P(m,x,t,\cdot)\mu_{m}(dx)$ for some integer $m$ satisfying $m\neq n$ and $m\leq t$, then for any continuous and bounded function $\phi$,
\begin{align*}
&\bigg|\int_{\mathcal{X}}\phi(x)\nu_{t}(dx)-\int_{\mathcal{X}}\phi(x)\tilde{\nu}_{t}(dx)\bigg|\\
=\:&\bigg|\lim_{N\rightarrow\infty}\frac{1}{T_{N}}\sum_{i=0}^{T_{N}-1}\int_{\mathcal{X}}\phi(x)(\int_{\mathcal{X}}P(n-i,z,t,dx)\mu(dz)-\int_{\mathcal{X}}P(m-i,z,t,dx)\mu(dz))\bigg|\\
\leq\:&\lim_{N\rightarrow\infty}\frac{2}{T_{N}}\|\phi\|_{\infty}\cdot|m-n|\\
=\:&0.
\end{align*}
It follows from the above argument that $\nu_{t}$ is well defined. On the other hand,
\begin{equation*}
P_{s,t}^{\ast}\nu_{s} = \int_{\mathcal{X}}\int_{\mathcal{X}}P(s,x,t,\cdot)P(n,y,s,dx)\mu_{n}(dy)=\nu_{t}.
\end{equation*}
Thus we have that $\{\nu_{t}\}_{t\in\mathbb{R}}$ is an invariant measure family for $P$.
\end{proof}

In the rest of this subsection, we provide another criterion for the existence of invariant measure families for $P$ using a Lyapunov function.

\begin{definition} \rm\label{Def-Lya}
A Borel measurable function $V:\mathbb{Z}\times\mathcal{X}\to[0,\infty]$ is called a Lyapunov function for the transition probabilities $P$ if $V$ satisfies the following conditions:
\begin{enumerate}
\item the set $B:=\{x:\sup_{m\in\mathbb{Z}}V(m,x)<\infty\}$ is nonempty;
\item for any $x\in\mathcal{X}$, $n\in\mathbb{Z}$ and $c\geq 0$, $V(n,x)\geq f(x)\geq 0$ for some function $f$ with the compact level set $\{x:f(x)\leq c\}$;
\item there exist $0<\gamma<1$ and $C>0$ such that for any $x\in B$ and $n\in\mathbb{Z}$,
      \begin{equation*}
      \int_{\mathcal{X}}V(n,y)P(n-1,x,n,dy) \leq \gamma V(n-1,x)+C.
      \end{equation*}
\end{enumerate}
\end{definition}

\begin{theorem}\label{Th-Lyapunov}
Let $P_{n,n+m},n\in\mathbb{Z},m\in\mathbb{N}$ be Feller. If the transition probabilities $P$ have a Lyapunov function $V$, then there exists an invariant measure family for $P$.
\end{theorem}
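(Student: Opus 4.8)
The plan is to reduce to Theorem~\ref{Th-1existence} by showing that the Lyapunov function forces the requisite tightness. Fix any point $x_{0}\in B$ and set $\mu:=\delta_{x_{0}}$. I would first show by induction on $s$ that the iterated bound in condition (iii) of Definition~\ref{Def-Lya} propagates: since $x_{0}\in B$, applying (iii) along the chain $n-s\to n-s+1\to\cdots\to n$ and using that $\int_{\mathcal X}V(n,y)P(n-1,x,n,dy)\le\gamma V(n-1,x)+C$ for $x\in B$, together with the fact (which needs a short argument) that the relevant intermediate measures are supported where the drift inequality applies, one gets
\begin{equation*}
\int_{\mathcal X}V(n,y)\,P(n-s,x_{0},n,dy)\ \le\ \gamma^{s}V(n-s,x_{0})+\frac{C}{1-\gamma}\ \le\ \sup_{m\in\mathbb Z}V(m,x_{0})+\frac{C}{1-\gamma}=:R,
\end{equation*}
uniformly in $s\in\mathbb N^{+}$, where $R<\infty$ precisely because $x_{0}\in B$ by condition (i).

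Next I would convert this uniform moment bound into tightness of the family $\{\int_{\mathcal X}P(n-s,x,n,\cdot)\mu(dx)\}_{s\in\mathbb N^{+}}=\{P(n-s,x_{0},n,\cdot)\}_{s\in\mathbb N^{+}}$. By condition (ii), $V(n,y)\ge f(y)$ for a function $f\ge 0$ whose sublevel sets $\{f\le c\}$ are compact, so $\int_{\mathcal X}f(y)\,P(n-s,x_{0},n,dy)\le R$ for all $s$. Given $\varepsilon>0$, choose $c=R/\varepsilon$; then Markov's inequality gives
\begin{equation*}
P(n-s,x_{0},n,\{f>c\})\ \le\ \frac{1}{c}\int_{\mathcal X}f(y)\,P(n-s,x_{0},n,dy)\ \le\ \frac{R}{c}=\varepsilon,
\end{equation*}
so the compact set $\{f\le c\}$ carries mass at least $1-\varepsilon$ uniformly in $s$. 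This is exactly tightness of the family for this fixed $n$, and $n\in\mathbb Z$ was arbitrary. Since $P_{n,n+m}$ is Feller by hypothesis, Theorem~\ref{Th-1existence} applies with this choice of $\mu$ and yields an invariant measure family for $P$.

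The main obstacle is the first step: condition (iii) of Definition~\ref{Def-Lya} only asserts the drift inequality for starting points $x\in B$, so to iterate it along $n-s\to\cdots\to n$ I must be careful that the drift is being applied legitimately at each stage. The clean way around this is to interpret (iii) as it is used, namely that $\int_{\mathcal X}V(n,y)P(n-1,x,n,dy)\le\gamma V(n-1,x)+C$ can be chained starting from the deterministic point $x_{0}\in B$: writing $m_{j}:=\int_{\mathcal X}V(n-j,y)\,P(n-s,x_{0},n-j,dy)$ for $j=s,s-1,\dots,0$, one has $m_{s}=V(n-s,x_{0})\le R$, and the Chapman--Kolmogorov identity together with (iii) (applied under the integral, which is where one genuinely needs either $B=\mathcal X$ or a monotonicity/closure property of $B$ under the dynamics) gives $m_{j-1}\le\gamma m_{j}+C$, whence $m_{0}\le\gamma^{s}R+C/(1-\gamma)\le R+C/(1-\gamma)$ as claimed. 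I would state explicitly in the proof that the drift condition is used in this integrated form, mirroring exactly how the analogous step is handled in the homogeneous Lyapunov criterion.
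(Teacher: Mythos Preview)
Your proposal is essentially the paper's own proof: fix $x_{0}\in B$, iterate the drift condition (iii) to obtain a uniform bound on $\int_{\mathcal X}V(n,y)\,P(n-s,x_{0},n,dy)$, combine (ii) with Chebyshev's inequality to deduce tightness of $\{P(n-s,x_{0},n,\cdot)\}_{s\in\mathbb N^{+}}$, and then invoke Theorem~\ref{Th-1existence} with $\mu=\delta_{x_{0}}$. The concern you flag about iterating (iii) through intermediate points that may not lie in $B$ is legitimate, but the paper performs exactly the same iteration without further comment, so your argument matches it step for step.
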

\begin{proof}
 For any $x\in B$, $n\in \mathbb{Z}$ and $m\in \mathbb{N}$, using $\gamma<1$ and $\sup_{m\in\mathbb{Z}}V(m,x)<\infty$, we have
\begin{align*}
\int_{\mathcal{X}}V(n,y)P(n-m,x,n,dy)
\leq\:&
\int_{\mathcal{X}}(\gamma V(n-1,z)+C) P(n-m,x,n-1,dz)\\
\leq
\:&\gamma^{m}V(n-m,x)+C\frac{1}{1-\gamma}\\
\leq\:&
M,
\end{align*}
for some $0<M=M(x)<\infty$ by iterating.
Therefore, using (ii) of Definition \ref{Def-Lya} and Chebyshev's inequality, we have for any $\epsilon>0$ and $c$ large enough,
\begin{align*}
P(n-m,x,n,\{x:f(x)\leq c\})\geq\:& P(n-m,x,n,\{x:V(n,x)\leq c\})\\
\geq\:& 
1-\frac{\int_{\mathcal{X}}V(n,y)P(n-m,x,n,dy)}{c}\\
>\:&
1-\epsilon,
\end{align*}
and consequently $\{P(n-m,x,n,\cdot)\}_{m\in\mathbb{N}^{+}}$ is tight, which contains a convergent subsequence.
Thus there exists an invariant measure family for $P$ by Theorem \ref{Th-1existence} in which $\mu=1_{x}$.
\end{proof}

\begin{remark}\rm
Lyapunov functions with the time parameter can be used to study stability properties of solutions of nonautonomous differential equations (see \cite{YT} for more details).
\end{remark}

\subsection{Criteria for the uniqueness and exponential ergodicity}

In this subsection, we show the uniqueness and exponential ergodicity of the invariant measure family for $P$.
We say the invariant measure family $\{\mu_{t}\}_{t\in\mathbb{R}}$ for $P$ is exponential ergodic with the total variation norm $\|\cdot\|_{TV}$
if there exist constants $0<\alpha<1$ and $M=M(x)>0$ such that
\begin{equation*}
\|P(\tau-t,x,\tau,\cdot)-\mu_{\tau}\|_{TV}
\leq
M\alpha^{t},
\end{equation*}
for any $\tau\in\mathbb{R}$, $t\geq 0$ and $\mu_{\tau}$-almost surely $x$.

For homogeneous Markov processes, Harris \cite{TEH} proved that the transition probabilities converge to the unique invariant measure at an exponential rate. Moreover, Hairer and Mattingly \cite{HM2} provided a skillful proof for the exponential rate. Then Hairer \cite{HM1} executed a slight simplification of the proof in \cite{HM2}. In this paper, we extend the idea of Hairer and Mattingly \cite{HM2} to inhomogeneous Markov processes.
The following two assumptions are used in obtaining the uniqueness and exponential ergodicity of invariant measure families for Markov chains.

\begin{assumption}\label{Assumption-exponential1}\rm
There exists a Borel measurable function $V:\mathcal{X}\to[0,\infty]$ such that
\begin{enumerate}
\item the set $\{x:V(x)<\infty\}$ is nonempty;
\item there exist $0<\gamma<1$ and $C>0$ such that for any $x\in \mathcal{X}$ and $n\in\mathbb{Z}$,
      \begin{equation*}
      \int_{\mathcal{X}}V(y)P(n-1,x,n,dy) \leq \gamma V(x)+C.
      \end{equation*}
\end{enumerate}
\end{assumption}

\begin{remark}\rm
The function $V$ in Assumption \ref{Assumption-exponential1} is different from the one in Definition \ref{Def-Lya}, since the domains are different and also there is no assumption about compactness for the function $V$ in Assumption \ref{Assumption-exponential1}. But in some ways, we can construct a function $V$ which satisfies Definition \ref{Def-Lya} from the one satisfying Assumption \ref{Assumption-exponential1}.
For example, we assume there exists a Borel measurable function $f:\mathcal X\to[0,\infty]$ with the compact level set $\{x;f(x)\leq c\}$ for any $c\geq 0$ such that Assumption \ref{Assumption-exponential1} is satisfied. Then the function 
\begin{equation*}
V:(n,x)\mapsto(\pi-\arctan n)f(x), \ (n,x)\in\mathbb Z\times\mathcal X,
\end{equation*}
satisfies items (i)-(iii) in Definition \ref{Def-Lya}.
\end{remark}

For any $R>0$, define the set $\mathcal{C}_{R}=\{(x,y):V(x)+V(y)\leq R\}$.

\begin{assumption}\label{Assumption-exponential2}\rm
For any $R>0$, there exist constants $n_{0}=n_{0}(R)\in\mathbb{N}^{+}$ and $0<\delta=\delta(R)<1$ such that for any $n\in\mathbb{Z}$,
\begin{equation}\label{2TV}
\sup_{(x,y)\in\mathcal{C}_{R}}\|P(n-n_{0},x,n,\cdot)-P(n-n_{0},y,n,\cdot)\|_{TV} \leq 2(1-\delta).
\end{equation}
\end{assumption}

\begin{remark}\label{Ref-bound}\rm
\
\begin{enumerate}
\item An alternative way of formulating \eqref{2TV} is that for any $(x,y)\in\mathcal{C}_{R}$ and $n\in\mathbb{Z}$,
\begin{equation*}
\bigg|\int_{\mathcal{X}}\varphi(z)P(n-n_{0},x,n,dz)-\int_{\mathcal{X}}\varphi(z)P(n-n_{0},y,n,dz)\bigg|
\leq 2(1-\delta),
\end{equation*}
uniformly over all measurable functions $\varphi:\mathcal{X}\rightarrow\mathbb{R}$ with $\|\varphi\|_{\infty}\leq 1$.
\item Assumption \ref{Assumption-exponential2} is satisfied if there exist a constant $0<\alpha<1$ and a probability measure $\nu$ such that for any $n\in\mathbb{Z}$ and $x\in\{x:V(x)\leq R\}$,
      \begin{equation*}
      P(n-n_{0},x,n,\cdot) \geq \alpha\nu,
      \end{equation*}
      which is referred to as the minorization condition or the Doeblin's condition localized to level sets of $V$ in \cite{MT} and was used to study invariant measures for homogeneous Markov processes.
\item Assumptions \ref{Assumption-exponential1} and \ref{Assumption-exponential2} ensure that the system is dissipative outside the level set  $\{x:V(x)\leq R_1\}$ for some $R_1>0$ of $V$ and the contraction property of Markov chain on level sets $\{x:V(x)\leq R\}$ for any $R>0$ of $V$, respectively, by which we obtain the exponential ergodicity of Markov chain.
\end{enumerate}
\end{remark}

Denote the weighted supremum norm $\|\cdot\|$ with respect to function $V:\mathcal{X}\rightarrow[0,\infty]$ by
\begin{equation}\label{eqweinorm}
\|\varphi\| := \sup_{x\in\mathcal{X}}\frac{|\varphi(x)|}{1+V(x)}.
\end{equation}
For $\beta>0$, we introduce the definitions of the seminorms $\|\cdot\|_{\beta}$ and $\vvvert\cdot\vvvert_{\beta}$:
\begin{align*}
\|\varphi\|_{\beta} := \sup_{x}\frac{|\varphi(x)|}{1+\beta V(x)},\qquad 
\vvvert\varphi\vvvert_{\beta} := \sup_{x\neq y}\frac{|\varphi(x)-\varphi(y)|}{2+\beta V(x)+\beta V(y)}.
\end{align*}

By the same proof as in \cite{HM2}, we obtain the following Lemma.
\begin{lemma}\label{Lem-norm}
It holds that $\vvvert\varphi\vvvert_{\beta}=\inf_{c\in\mathbb{R}}\|\varphi+c\|_{\beta}$ for any measurable function $\varphi:\mathcal{X}\to\mathbb{R}$.
\end{lemma}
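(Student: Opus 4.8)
The plan is to prove the two inequalities separately, treating $\vvvert\varphi\vvvert_\beta \le \inf_c\|\varphi+c\|_\beta$ as the easy direction and the reverse inequality as the one requiring a genuine construction. For the easy direction, fix any $c\in\mathbb R$ and note that for $x\neq y$,
\begin{equation*}
|\varphi(x)-\varphi(y)| = |(\varphi(x)+c)-(\varphi(y)+c)| \le |\varphi(x)+c| + |\varphi(y)+c| \le \|\varphi+c\|_\beta(1+\beta V(x)) + \|\varphi+c\|_\beta(1+\beta V(y)),
\end{equation*}
so dividing by $2+\beta V(x)+\beta V(y)$ and taking the supremum over $x\neq y$ gives $\vvvert\varphi\vvvert_\beta \le \|\varphi+c\|_\beta$; since $c$ was arbitrary, take the infimum.

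For the reverse inequality, write $S := \vvvert\varphi\vvvert_\beta$ (assuming $S<\infty$, else there is nothing to prove) and I would look for a constant $c$ with $\|\varphi+c\|_\beta \le S$, i.e. such that
\begin{equation*}
-S(1+\beta V(x)) \le \varphi(x)+c \le S(1+\beta V(x)) \quad\text{for all } x,
\end{equation*}
equivalently $c \le S(1+\beta V(x)) - \varphi(x)$ and $c \ge -S(1+\beta V(x)) - \varphi(x)$ for all $x$. Such a $c$ exists iff $\sup_x\big(-S(1+\beta V(x))-\varphi(x)\big) \le \inf_x\big(S(1+\beta V(x))-\varphi(x)\big)$, which upon rearranging is exactly the requirement that for all $x,y$,
\begin{equation*}
\varphi(x)-\varphi(y) \le S\big(1+\beta V(x)\big) + S\big(1+\beta V(y)\big) = S\big(2+\beta V(x)+\beta V(y)\big).
\end{equation*}
For $x\neq y$ this is immediate from the definition of $S=\vvvert\varphi\vvvert_\beta$, and for $x=y$ it reads $0 \le 2S$, which holds since $S\ge 0$. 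Hence a valid $c$ exists (any point in the nonempty interval $[\sup_x(-S(1+\beta V(x))-\varphi(x)),\ \inf_x(S(1+\beta V(x))-\varphi(x))]$), and for that $c$ we get $\|\varphi+c\|_\beta \le S$, so $\inf_c\|\varphi+c\|_\beta \le \vvvert\varphi\vvvert_\beta$.

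The only subtlety — the main obstacle, such as it is — is the bookkeeping around infinite values of $V$ and the edge cases: one should note that $V$ takes values in $[0,\infty]$, so $1+\beta V(x)$ may be $+\infty$; at such points the constraints on $c$ are vacuous, and the argument goes through unchanged by restricting all suprema and infima to the set $\{x : V(x)<\infty\}$, which is where $\varphi$ is effectively constrained. One should also dispatch the trivial case $S=\infty$ (both sides infinite) and observe that $\vvvert\varphi\vvvert_\beta$ is unaffected by adding a constant to $\varphi$, which is consistent with the identity. This is precisely the argument of Hairer–Mattingly \cite{HM2}, reproduced here for completeness.
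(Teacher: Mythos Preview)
Your proof is correct and follows exactly the Hairer--Mattingly argument that the paper invokes (the paper does not give its own proof, simply stating ``By the same proof as in \cite{HM2}''). Your handling of the two inequalities and the edge cases (points where $V=\infty$, the trivial case $S=\infty$) is sound, so there is nothing to add.
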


We first derive the contraction property of the transition semigroup $P_{n,n+m},n\in\mathbb{Z},m\in\mathbb{N}$ with the norm $\vvvert\cdot\vvvert_{\beta}$ in discrete time.

\begin{theorem}\label{Th-rate}
Suppose Assumptions \ref{Assumption-exponential1} and \ref{Assumption-exponential2} hold. Then there exist constants $n_{0}\in\mathbb{N}^{+}$, $0<\eta<1$ and $\beta>0$ such that
\begin{equation*}
\bigg\vvvert\int_{\mathcal{X}}\varphi(y)P(n-kn_{0},\cdot,n,dy)\bigg\vvvert_{\beta}
\leq
\eta\bigg\vvvert\int_{\mathcal{X}}\varphi(y)P(n-(k-1)n_{0},\cdot,n,dy)\bigg\vvvert_{\beta},
\end{equation*}
for any $n\in\mathbb{Z}$, $k\in\mathbb{N}^{+}$ and any measurable function $\varphi:\mathcal{X}\to\mathbb{R}$.
\end{theorem}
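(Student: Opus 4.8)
The plan is to follow the Hairer--Mattingly strategy adapted to the inhomogeneous setting, proving the one-step (in units of $n_0$) contraction in the norm $\vvvert\cdot\vvvert_\beta$ directly. By Lemma \ref{Lem-norm} we have $\vvvert\psi\vvvert_\beta = \inf_{c\in\mathbb{R}}\|\psi+c\|_\beta$, so it suffices to show that for a suitable $\beta>0$, a suitable constant $c_0$ (which we may take to be zero after subtracting a constant from $\varphi$, since the operators preserve constants), and $\psi := \int_{\mathcal{X}}\varphi(y)P(n-(k-1)n_0,\cdot,n,dy)$ with $\vvvert\psi\vvvert_\beta\le 1$, the function $\int_{\mathcal{X}}\psi(z)P(n-n_0,\cdot,n-(k-1)n_0,dz)$ — which by the Chapman--Kolmogorov relation equals $\int_{\mathcal{X}}\varphi(y)P(n-kn_0,\cdot,n,dy)$ — satisfies $\vvvert\,\cdot\,\vvvert_\beta\le\eta$. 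Thus it is enough to prove: if $\vvvert\psi\vvvert_\beta\le 1$ then the $(n-n_0)$-to-$\tau$ transition applied to $\psi$ has triple-norm at most $\eta$, uniformly in $n\in\mathbb{Z}$ and in the intermediate time $\tau=n-(k-1)n_0$; and since Assumptions \ref{Assumption-exponential1} and \ref{Assumption-exponential2} are uniform in the time index, this is exactly a uniform one-step statement about $P(m-n_0,\cdot,m,\cdot)$ for all $m\in\mathbb{Z}$.

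The argument then splits into the two familiar cases according to whether $(x,y)$ lies inside or outside the set $\mathcal{C}_R$ for a threshold $R$ to be fixed. \emph{Case 1, $(x,y)\notin\mathcal{C}_R$, i.e.\ $V(x)+V(y)>R$:} here we use only the Lyapunov drift. By iterating Assumption \ref{Assumption-exponential1}(ii), $\int_{\mathcal{X}}V(z)P(m-n_0,x,m,dz)\le\gamma^{n_0}V(x)+C/(1-\gamma)$, and similarly for $y$. Estimating $|\psi(x')-\psi(y')|$ bound aside, one uses $|\psi(z)|\le 1+\beta V(z)$ (from $\vvvert\psi\vvvert_\beta\le 1$ together with Lemma \ref{Lem-norm}, after normalizing the additive constant) to get
\begin{align*}
\Big|\!\int\!\psi\,P(m-n_0,x,m,dz)-\!\int\!\psi\,P(m-n_0,y,m,dz)\Big|
&\le 2+\beta\Big(\gamma^{n_0}(V(x)+V(y))+\tfrac{2C}{1-\gamma}\Big).
\end{align*}
Dividing by $2+\beta V(x)+\beta V(y)$ and using $V(x)+V(y)>R$, one checks this ratio is at most some $\eta_1<1$ provided $\gamma^{n_0}$ is small and $R$ is large relative to $C/(1-\gamma)$. \emph{Case 2, $(x,y)\in\mathcal{C}_R$:} here the denominator $2+\beta V(x)+\beta V(y)\le 2+\beta R$ is bounded, so it suffices to bound the numerator by something strictly less than $2+\beta R$. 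Write the numerator using the coupling/minorization input: Assumption \ref{Assumption-exponential2} gives $\|P(m-n_0,x,m,\cdot)-P(m-n_0,y,m,\cdot)\|_{TV}\le 2(1-\delta)$, and since we may replace $\psi$ by $\psi-c$ for any $c$ without changing the left-hand side, choose $c$ so that $\psi-c$ has small sup-norm on the relevant part; more precisely one splits $\psi = \psi\wedge c' \vee(-c')$ plus a tail controlled by $\beta V$, bounding the bounded part by the total-variation estimate $2(1-\delta)$ times its sup-norm and the unbounded tail by $\beta\gamma^{n_0}R$ (again via the drift). This yields a numerator bound $\le 2(1-\delta)(1+\text{small}) + \beta\cdot(\text{small})$, which is $<2+\beta R$ once $\beta$ is chosen small enough.

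Finally one fixes the constants in the correct order: first pick $R$ large so that $2C/(R(1-\gamma))$ is small; then pick $n_0$ large so that $\gamma^{n_0}$ is small relative to $R$ and $\delta$ (note this is consistent with, and refines, the $n_0=n_0(R)$ supplied by Assumption \ref{Assumption-exponential2}, by replacing $n_0$ with a large multiple of itself and using that iterates of a contraction in total variation still contract); then pick $\beta>0$ small enough to make Case 2 work while keeping Case 1 valid; and set $\eta:=\max(\eta_1,\eta_2)<1$. The main obstacle I anticipate is bookkeeping the interplay between $\beta$, $R$, $n_0$ and the additive-constant normalization so that \emph{both} cases close simultaneously with the \emph{same} $\beta$ and $\eta$ — in particular making sure that enlarging $n_0$ to absorb $\gamma^{n_0}$ does not break the uniform-in-$n$ total-variation bound, which is handled by noting $\|P(m-kn_0,x,m,\cdot)-P(m-kn_0,y,m,\cdot)\|_{TV}$ is nonincreasing in $k$ by the Markov property and hence still $\le 2(1-\delta)$ after replacing $n_0$ by $kn_0$. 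The uniformity in the time index $m\in\mathbb{Z}$ throughout is free because all hypotheses are stated uniformly in $m$.
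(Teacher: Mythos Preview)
Your proposal is correct and follows essentially the same Hairer--Mattingly two-case argument as the paper; the paper's order of choices is simply $\gamma^{*}\in(\gamma,1)$, then $R$ with $(1-\gamma)(\gamma^{*}-\gamma)R\ge 2C$, then the $n_0,\delta$ supplied by Assumption~\ref{Assumption-exponential2} for this $R$, and finally $\beta=\delta/(\gamma R+2C/(1-\gamma))$ --- no enlargement of $n_0$ is needed since $n_0\ge1$ already gives $\gamma^{n_0}\le\gamma<\gamma^{*}$. One slip to fix in your Case~2: the numerator must be bounded by something strictly less than $2$ (the infimum of the denominator $2+\beta V(x)+\beta V(y)$ over $\mathcal C_R$), not merely less than its supremum $2+\beta R$; your actual estimate $2(1-\delta)+\beta(\gamma R+2C/(1-\gamma))$ does achieve this with the paper's choice of $\beta$ (it equals $2-\delta$), but the target you wrote would not by itself yield a ratio below~$1$.
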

\begin{proof}
Choose $\gamma^{\ast}\in(\gamma,1)$ and $R$ such that $(1-\gamma)(\gamma^{\ast}-\gamma)R\geq 2C$. By Assumption \ref{Assumption-exponential2} there exist $n_{0}\in\mathbb{N}^{+}$ and $0<\delta<1$ such that \eqref{2TV} holds. We now choose $\beta=\delta/(\gamma R+2 C/(1-\gamma))$.
Fix $\varphi,n,k$ with $\vvvert\int_{\mathcal{X}}\varphi(y)P(n-(k-1)n_{0},\cdot,n,dy)\vvvert_{\beta}<1$. Notice that by Lemma \ref{Lem-norm}, we assume without loss of generality that $\|\int_{\mathcal{X}}\varphi(y)P(n-(k-1)n_{0},\cdot,n,dy)\|_{\beta}<1$. It suffices to prove that for any $x\neq y$,
\begin{equation*}
\bigg|\int_{\mathcal{X}}\varphi(z)P(n-kn_{0},x,n,dz)-\int_{\mathcal{X}}\varphi(z)P(n-kn_{0},y,n,dz)\bigg|
\leq
\eta(2+\beta V(x)+\beta V(y)).
\end{equation*}
For the case of $V(x)+V(y)\geq R$, it holds that
\begin{align*}
&\int_{\mathcal{X}}\varphi(z)P(n-kn_{0},x,n,dz)-\int_{\mathcal{X}}\varphi(z)P(n-kn_{0},y,n,dz)\\
\leq \:&
2+\beta\gamma^{n_{0}} (V(x)+ V(y))+\frac{2\beta C}{1-\gamma}\\
\leq\:&
2+\beta\gamma^{\ast}(V(x)+V(y))+\beta\bigg[\frac{2C}{1-\gamma}-(\gamma^{\ast}-\gamma)(V(x)+V(y))\bigg].
\end{align*}
Reasoning as above we have for some $0<\alpha_{1}<1$,
\begin{equation*}
\int_{\mathcal{X}}\varphi(z)P(n-kn_{0},x,n,dz)-\int_{\mathcal{X}}\varphi(z)P(n-kn_{0},y,n,dz)
\leq
\alpha_{1}[2+\beta(V(x)+V(y))].
\end{equation*}

By the assumption $\|\int_{\mathcal{X}}\varphi(y)P(n-(k-1)n_{0},\cdot,n,dy)\|_{\beta}<1$, we set the function $\int_{\mathcal{X}}\varphi(y)P(n-(k-1)n_{0},x,n,dy)=\varphi_{1}(x)+\varphi_{2}(x)$, where $|\varphi_{1}(x)|\leq 1$ and $|\varphi_{2}(x)|\leq \beta V(x)$. For the case of $V(x)+V(y)< R$, we have
\begin{align*}
&\int_{\mathcal{X}}\varphi(z)P(n-kn_{0},x,n,dz)-\int_{\mathcal{X}}\varphi(z)P(n-kn_{0},y,n,dz)\\
\leq&
\int_{\mathcal{X}}\varphi_{1}(z)(P(n-kn_{0},x,n-(k-1)n_{0},dz)-P(n-kn_{0},y,n-(k-1)n_{0},dz))\\
&
+\bigg|\int_{\mathcal{X}}\varphi_{2}(z)P(n-kn_{0},x,n-(k-1)n_{0},dz)\bigg|\\
&
+\bigg|\int_{\mathcal{X}}\varphi_{2}(z)P(n-kn_{0},y,n-(k-1)n_{0},dz)\bigg|\\
\leq&
\int_{\mathcal{X}}\varphi_{1}(z)(P(n-kn_{0},x,n-(k-1)n_{0},dz)-P(n-kn_{0},y,n-(k-1)n_{0},dz))\\
&
+\gamma\beta V(x)+\gamma\beta V(y)+\frac{2\beta C}{1-\gamma}.
\end{align*}
Therefore, by Remark \ref{Ref-bound} we have
\begin{align*}
&\bigg|\int_{\mathcal{X}}\varphi(z)P(n-kn_{0},x,n,dz)-\int_{\mathcal{X}}\varphi(z)P(n-kn_{0},y,n,dz)\bigg|\\
\leq\:&
2(1-\delta)+\gamma\beta R+\frac{2\beta C}{1-\gamma}\\
\leq\:&
\bigg(1-\frac{\delta}{2}\bigg)(2+\beta V(x)+\beta V(y)).
\end{align*}
We set $\eta=\max\{\alpha_{1},1-\delta/2\}$ and then complete the proof.
\end{proof}

Next we show the existence and uniqueness of the invariant measure family for $P$ in discrete time which satisfies \eqref{eq.discreuni}, on the space of probability measures integrating $V$.

\begin{theorem}\label{thm-uniquet}
Suppose Assumptions \ref{Assumption-exponential1} and \ref{Assumption-exponential2} hold. Then there exists a unique sequence of probability measures $\{\mu_{n}\}_{n\in\mathbb{Z}}$ satisfying $\int_{\mathcal X}V(x)\mu_n(dx)<+\infty$ for any $n\in\mathbb Z$ such that for any $m<n$ and measurable $f$ with $\|f\|\leq 1$,
\begin{equation}\label{eq.discreuni}
\int_{\mathcal X}f(x)P^\ast_{m,n}\mu_m(dx)=\int_{\mathcal X}f(x)\mu_n(dx).
\end{equation}
\end{theorem}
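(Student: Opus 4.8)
The plan is to realise the family $\{\mu_n\}$ as the total–variation limit of the pulled–back measures $P(m,x_\ast,n,\cdot)$ as $m\to-\infty$, where $x_\ast$ is any point of the nonempty set $\{V<\infty\}$, and to read off both the invariance relation \eqref{eq.discreuni} and the uniqueness from the geometric contraction of Theorem \ref{Th-rate}.

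First I would record the a priori drift estimate obtained by iterating Assumption \ref{Assumption-exponential1}(ii): for every $\rho\in\mathcal P(\mathcal X)$ with $\int_{\mathcal X}V\,d\rho<\infty$ and all $m\le n$,
\[
\int_{\mathcal X}V(y)\,P^{\ast}_{m,n}\rho(dy)\;\le\;\gamma^{\,n-m}\!\int_{\mathcal X}V\,d\rho+\frac{C}{1-\gamma}\,,
\]
so the $V$-integrals of $P(m,x_\ast,n,\cdot)$ stay bounded by $B:=V(x_\ast)+C/(1-\gamma)$. Next, applying Theorem \ref{Th-rate} $\lfloor(n-m)/n_0\rfloor$ times, absorbing the at most $n_0$ remaining steps with the drift bound, and using Lemma \ref{Lem-norm}, one gets a constant $K_0=K_0(\beta,\gamma,C)$ with $\vvvert P_{m,n}f\vvvert_{\beta}\le K_0\,\eta^{\lfloor(n-m)/n_0\rfloor}$ for every $f$ with $\|f\|\le1$. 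Since $|g(x)-g(y)|\le\vvvert g\vvvert_{\beta}\,(2+\beta V(x)+\beta V(y))$, it follows that for any $\rho,\rho'\in\mathcal P(\mathcal X)$ whose $V$-integrals are at most $\bar B$,
\[
\sup_{\|f\|\le1}\Bigl|\textstyle\int_{\mathcal X}f\,dP^{\ast}_{m,n}\rho-\int_{\mathcal X}f\,dP^{\ast}_{m,n}\rho'\Bigr|
=\sup_{\|f\|\le1}\Bigl|\textstyle\int_{\mathcal X}P_{m,n}f\,d\rho-\int_{\mathcal X}P_{m,n}f\,d\rho'\Bigr|
\le K_0\,\eta^{\lfloor(n-m)/n_0\rfloor}\,(2+2\beta\bar B)\,,
\]
and, as $\|f\|_{\infty}\le1$ implies $\|f\|\le1$, the left-hand side dominates $\|P^{\ast}_{m,n}\rho-P^{\ast}_{m,n}\rho'\|_{TV}$.

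For existence, fix $n$ and apply the previous display with $\rho=P(m',x_\ast,m'',\cdot)$, $\rho'=\delta_{x_\ast}$ (both of $V$-integral $\le B$), followed by $P^{\ast}_{m'',n}$; this shows $m\mapsto P(m,x_\ast,n,\cdot)$ is $\|\cdot\|_{TV}$-Cauchy as $m\to-\infty$, and we let $\mu_n$ be its limit, a probability measure. Testing against $V\wedge j$ and letting $j\to\infty$ (monotone convergence, using the drift bound) gives $\int_{\mathcal X}V\,d\mu_n\le C/(1-\gamma)<\infty$; the same display shows $\mu_n$ is independent of $x_\ast$. Because $P^{\ast}_{m,n}$ is a $\|\cdot\|_{TV}$-contraction and $\mu_m=\lim_{p\to-\infty}P(p,x_\ast,m,\cdot)$, the Chapman--Kolmogorov equation yields $P^{\ast}_{m,n}\mu_m=\lim_{p\to-\infty}P(p,x_\ast,n,\cdot)=\mu_n$, which is \eqref{eq.discreuni}. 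For uniqueness, let $\{\mu_n'\}$ be another such family; applying \eqref{eq.discreuni} to $f/\|f\|_{\infty}$ shows it holds for every bounded measurable $f$, hence $P^{\ast}_{m,n}\mu_m'=\mu_n'$ for all $m<n$. Then for $\|f\|\le1$ and any $m<n$, $\int_{\mathcal X}f\,d\mu_n-\int_{\mathcal X}f\,d\mu_n'=\int_{\mathcal X}P_{m,n}f\,d\mu_m-\int_{\mathcal X}P_{m,n}f\,d\mu_m'$, which by the display is at most $K_0\,\eta^{\lfloor(n-m)/n_0\rfloor}\bigl(2+2\beta\max\{C/(1-\gamma),\int_{\mathcal X}V\,d\mu_m'\}\bigr)$; letting $m\to-\infty$ forces $\mu_n=\mu_n'$.

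I expect the main obstacle to be precisely the final limit in the uniqueness step: one must know that $\eta^{\lfloor(n-m)/n_0\rfloor}\int_{\mathcal X}V\,d\mu_m'\to0$ as $m\to-\infty$ for the competing family, whereas the construction only supplies $\int_{\mathcal X}V\,d\mu_m'<\infty$ for each fixed $m$. This is where the structure is genuinely needed: from $P^{\ast}_{m-1,m}\mu_{m-1}'=\mu_m'$ and Assumption \ref{Assumption-exponential1}(ii) one has the recursion $\int_{\mathcal X}V\,d\mu_m'\le\gamma\int_{\mathcal X}V\,d\mu_{m-1}'+C$, and this together with the finiteness of every $\int_{\mathcal X}V\,d\mu_m'$ must be exploited to control the growth of these moments against the geometric factor — for instance by first establishing the uniform a priori bound $\sup_m\int_{\mathcal X}V\,d\mu_m'\le C/(1-\gamma)$, or by a refined splitting of $P_{m,n}f$ over a sublevel set $\{V\le T\}$ whose complement carries little $\mu_m'$-mass. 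Apart from this bookkeeping the argument is exactly the weighted-norm scheme of \cite{HM2}, transported from powers of a single transition operator to the time-shifted family $\{P_{m,n}\}$.
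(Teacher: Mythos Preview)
Your plan is essentially the paper's: construct $\mu_n$ as the total--variation limit of $P(n-kn_0,x_\ast,n,\cdot)$, control the $V$--moments via the iterated drift bound, and read off both invariance and uniqueness from the weighted--norm contraction of Theorem~\ref{Th-rate}. The existence half of your sketch matches the paper almost line for line (the paper routes through Ces\`aro averages and an intermediate family $\nu_n=P^{\ast}_{mn_0,n}\mu_{mn_0}$, but this is cosmetic); your Cauchy argument and the passage to $P^{\ast}_{m,n}\mu_m=\mu_n$ are exactly what the paper does.

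The delicate point you isolate in the uniqueness step is real, and --- worth noting --- the paper's own argument has precisely the same lacuna: after bounding $\bigl|\int f\,d\mu_n-\int f\,d\nu_n\bigr|$ by $\tfrac{2+\beta}{\beta}\,\epsilon\bigl(2+\beta\int V\,d\mu_{n-kn_0}+\beta\int V\,d\nu_{n-kn_0}\bigr)$, the paper simply writes ``since $\epsilon$ is arbitrary'' without controlling the $k$--dependence of the moments. Your first proposed fix, however, does not go through as stated: the recursion $a_m\le\gamma a_{m-1}+C$ with $a_m<\infty$ for every $m$ does \emph{not} force $\sup_m a_m\le C/(1-\gamma)$, since the sequence $a_m=C/(1-\gamma)+\gamma^{m}$ for $m\le0$ satisfies the recursion with equality yet blows up as $m\to-\infty$. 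The splitting idea runs into the same wall, because the threshold $T$ needed to make $\mu_m'(\{V>T\})$ small must scale with $\int V\,d\mu_m'$. So this is not mere bookkeeping: neither your outline nor the paper's proof actually closes the gap, and some genuine additional input --- for instance a direct comparison of the contraction rate $\eta$ produced in Theorem~\ref{Th-rate} with $\gamma^{n_0}$, or an independent argument ruling out invariant families whose $V$--moments grow like $\gamma^{m}$ backwards in time --- is required to finish.
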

\begin{proof}
By Theorem \ref{Th-rate}, for any $\epsilon>0$, there exists $N\in\mathbb{N}$ such that for any $k\geq N$, $x\neq y$ and measurable $\varphi:\mathcal{X}\to\mathbb{R}$ with $\vvvert\varphi\vvvert_{\beta}\leq 1$,
\begin{equation}\label{eq.beta}
\bigg|\int_{\mathcal{X}}\varphi(z)P(n-kn_{0},x,n,dz)-\int_{\mathcal{X}}\varphi(z)P(n-kn_{0},y,n,dz)\bigg|
\leq
\epsilon(2+\beta V(x)+\beta V(y)).
\end{equation}
Notice that when $f$ is measurable and $\|f\|\leq 1$, namely for any $x\in\mathcal X$, $|f(x)|\leq 1+V(x)$, we have for any $y\neq x$,
\begin{equation*}
\frac{\beta}{2+\beta}|f(x)-f(y)|<2+\beta V(x)+\beta V(y).
\end{equation*}
Therefore, it holds that
\begin{equation}\label{eq.beta1}
\bigg\vvvert \frac{\beta}{2+\beta}f\bigg\vvvert_{\beta}\leq 1.
\end{equation}
Thus we have for any $x\in\{x:V(x)< \infty\}$ and $m>k\geq N$, there exists positive constant $M$ depending on $x$ such that for any measurable function $f$,
\begin{align*}
&\sup_{f:\|f\|\leq1}\bigg|\int_{\mathcal{X}}f(z)P(n-kn_{0},x,n,dz)-\int_{\mathcal{X}}f(z)P(n-mn_{0},x,n,dz)\bigg|\\
\leq\:&
\sup_{f:\|f\|\leq1}\int_{\mathcal{X}}\bigg|\int_{\mathcal{X}}f(z)P(n-kn_{0},x,n,dz)-\int_{\mathcal{X}}f(z)P(n-kn_{0},y,n,dz)\bigg|\\
\:&
P(n-mn_{0},x,n-kn_{0},dy)\\
\leq\:&
\frac{2+\beta}{\beta}\sup_{f:\|f\|\leq 1}\int_{\mathcal{X}}\epsilon(2+\beta V(x)+\beta V(y))P(n-mn_{0},x,n-kn_{0},dy)\\
\leq\:&
\frac{2+\beta}{\beta}\epsilon\bigg(2+\beta V(x)+\beta\gamma^{(m-k)n_{0}} V(x)+ \frac{\beta C}{1-\gamma}\bigg)\\
\leq\:& 
M\epsilon.
\end{align*}
Consequently, the sequence $\{P(n-kn_{0},x,n,\cdot)\}_{k\in\mathbb{N}}$ of probability measures is a Cauchy sequence. Since the space $\mathcal{P}(\mathcal{X})$ is complete with the total variation norm (see the item (iii) of Proposition 8.1 in \cite{BH2022}), using the arguments in the proof of Theorem 3.2 in \cite{HM2}, there exist probability measures $\mu_{n}$ satisfying $\int_{\mathcal X}V(x)\mu_n(dx)<+\infty$ such that $\lim_{k\rightarrow\infty}\sup_{\|f\|\leq 1}\int_{\mathcal X}f(z)(P(n-kn_0,x,n,dz)-\mu_n(dz))=0$ for any $n\in\mathbb{Z}$.

We first prove the existence of sequence of probability measures which satisfies \eqref{eq.discreuni}. Notice that for any $n\in\mathbb Z$, there exists $m\in\mathbb Z$ such that $n\geq mn_0$. Define the sequence of probability measures $\{\nu_n\}_{n\in\mathbb Z}$ by $\nu_n=P^\ast_{mn_0,n}\mu_{mn_0}$.
For measurable $f$ satisfying $\|f\|\leq 1$, namely for any $x\in\mathcal X$, $|f(x)|\leq 1+V(x)$, we have $|\int_{\mathcal X}f(z)P(mn_0,x,n,dz)|\leq(1+C/(1-\gamma))(1+V(x))$ by Assumption \ref{Assumption-exponential1}. Thus
\begin{equation}\label{eq.mn01}
  \bigg\|\int_{\mathcal X}\bigg(1+\frac{C}{1-\gamma}\bigg)^{-1}f(z)P(mn_0,\cdot,n,dz)\bigg\|\leq 1.
\end{equation}
It holds that as $K\to+\infty$, for measurable $f$,
\begin{align}
&\sup_{f:\|f\|\leq 1}\bigg|\int_{\mathcal X}f(z)\nu_n(dz)-\int_{\mathcal X}f(z)\frac{1}{K}\sum_{i=1}^{K}P(mn_0-in_0,x,n,dz)\bigg|\nonumber \\
\leq&\sup_{f:\|f\|\leq 1}\frac{1}{K}\sum_{i=1}^{K}\bigg|\int_{\mathcal X}f(z)\nu_n(dz)-\int_{\mathcal X}f(z)P(mn_0-in_0,x,n,dz)\bigg|\nonumber \\
=&\sup_{f:\|f\|\leq 1}\frac{1}{K}\sum_{i=1}^{K}\bigg|\int_{\mathcal X}f(z)(P^\ast_{mn_0,n}\mu_{mn_0}(dz)-P^\ast_{mn_0,n}P(mn_0-in_0,x,mn_0,dz))\bigg|\nonumber \\
\leq&\sup_{f:\|f\|\leq 1}\frac{1}{K}\sum_{i=1}^{K}\bigg(1+\frac{C}{1-\gamma}\bigg)\bigg|\int_{\mathcal X}f(z)\mu_{mn_0}(dz)-\int_{\mathcal X}f(z)P(mn_0-in_0,x,mn_0,dz)\bigg|\nonumber \\
\to& 0.\label{eq.exis}
\end{align}
We set $\tilde{\nu}_{n}=P_{zn_0,n}^{\ast}\mu_{zn_0}$ for some integer $z$ satisfying $z\neq m$ and $zn_0\leq n$. Using \eqref{eq.exis}, then for any $\epsilon>0$ there exists $N>0$ such that for any $K\geq N$ and measurable $f$,
\begin{align}
&\sup_{f:\|f\|\leq 1}\bigg|\int_{\mathcal X}f(z)\tilde{\nu}_n(dz)-\int_{\mathcal X}f(z)\nu_n(dz)\bigg|\nonumber\\
\leq&\sup_{f:\|f\|\leq 1}\frac{1}{K}\bigg|\sum_{i=1}^{K}\int_{\mathcal X}f(z)P(zn_0-in_0,x,n,dz)-\sum_{i=1}^{K}\int_{\mathcal X}f(z)P(mn_0-in_0,x,n,dz)\bigg|\label{eq.exis1}\\
&+\frac{\epsilon}{2}.\nonumber
\end{align}
Notice that $x$ is fixed.
If $z\leq m$, using Assumption \ref{Assumption-exponential1}, we obtain as $K\to+\infty$,
\begin{align}
 &\sup_{f:\|f\|\leq 1}\frac{1}{K}\bigg|\sum_{i=1}^{K}\int_{\mathcal X}f(z)P(zn_0-in_0,x,n,dz)-\sum_{i=1}^{K}\int_{\mathcal X}f(z)P(mn_0-in_0,x,n,dz)\bigg|\nonumber \\
 \leq&\sup_{f:\|f\|\leq 1}\frac{1}{K}\bigg[\bigg|\sum_{i=1}^{K}\int_{\mathcal X}f(z)\bigg(P(zn_0-in_0,x,n,dz)-P((z+1)n_0-in_0,x,n,dz)\bigg)\bigg|\nonumber \\
 &
 +\ldots+\bigg|\sum_{i=1}^{K}\int_{\mathcal X}f(z)\bigg(P((m-1)n_0-in_0,x,n,dz)-P(mn_0-in_0,x,n,dz)\bigg)\bigg|\bigg]\nonumber \\
 =&\sup_{f:\|f\|\leq 1}\frac{1}{K}\bigg[\bigg|-\int_{\mathcal X}f(z)P(zn_0,x,n,dz)+\int_{\mathcal X}f(z)P(zn_0-Kn_0,x,n,dz)\bigg|+\ldots\nonumber \\
 &+\bigg|-\int_{\mathcal X}f(z)P(mn_0-n_0,x,n,dz)+\int_{\mathcal X}f(z)P((m-1)n_0-Kn_0,x,n,dz)\bigg|\bigg]\nonumber \\
 \leq&\frac{2(1+V(x)+\frac{C}{1-\gamma})}{K}|z-m|\to 0.\label{eq.exis2}
\end{align}
Combining \eqref{eq.exis1} and \eqref{eq.exis2}, $\sup_{f:\|f\|\leq 1}\bigg|\int_{\mathcal X}f(z)\tilde{\nu}_n(dz)-\int_{\mathcal X}f(z)\nu_n(dz)\bigg|\leq \epsilon$ when $K$ is large.
Since $\epsilon$ is arbitrary, $\sup_{f:\|f\|\leq 1}\bigg|\int_{\mathcal X}f(z)\tilde{\nu}_n(dz)-\int_{\mathcal X}f(z)\nu_n(dz)\bigg|=0$.
Thus the definition of $\nu_n$ is independent of the choice of $m$.
It follows from the above argument that $\nu_{n}$ is well defined. 

For each $n_1,n_2\in\mathbb Z$ and $n_1\leq n_2$, there exists $m\in\mathbb Z$ such that $mn_0\leq n_1$. Using \eqref{eq.exis}, for every $\epsilon>0$, there exists $N\in\mathbb N$ such that for $K
\geq N$ and measurable $f$,
\begin{align}
&\sup_{f:\|f\|\leq 1}\bigg|\int_{\mathcal X}f(z)P^\ast_{n_1,n_2}\nu_{n_1}(dz)-\int_{\mathcal X}f(z)\nu_{n_2}(dz)\bigg| \nonumber\\
\leq&\sup_{f:\|f\|\leq 1}\bigg|\int_{\mathcal X}f(z)P^\ast_{n_1,n_2}\nu_{n_1}(dz)-\int_{\mathcal X}f(z)P^\ast_{n_1,n_2}\frac{1}{K}\sum_{i=1}^{K}P(mn_0-in_0,x,n_1,dz)\bigg| \nonumber\\
&+\sup_{f:\|f\|\leq 1}\bigg|\int_{\mathcal X}f(z)P^\ast_{n_1,n_2}\frac{1}{K}\sum_{i=1}^{K}P(mn_0-in_0,x,n_1,dz) \nonumber\\
&-\int_{\mathcal X}f(z)\frac{1}{K}\sum_{i=1}^{K}P(mn_0-in_0,x,n_2,dz)\bigg|
+\frac{\epsilon}{2} \nonumber\\
=&\sup_{f:\|f\|\leq 1}\bigg|\int_{\mathcal X}f(z)P^\ast_{n_1,n_2}\nu_{n_1}(dz)-\int_{\mathcal X}f(z)P^\ast_{n_1,n_2}\frac{1}{K}\sum_{i=1}^{K}P(mn_0-in_0,x,n_1,dz)\bigg|+\frac{\epsilon}{2}.\label{eq.n1}
\end{align}
Using the same arguments for \eqref{eq.mn01}, we have 
\begin{equation}\label{eq.betaC}
\bigg\|\int_{\mathcal X}\big(1+\frac{C}{1-\gamma}\big)^{-1}f(z)P(n_1,\cdot,n_2,dz)\bigg\|\leq 1.
\end{equation}
Combining \eqref{eq.betaC} and \eqref{eq.exis}, as $K\to+\infty$,
\begin{align}
&\sup_{f:\|f\|\leq 1}\bigg|\int_{\mathcal X}f(z)P^\ast_{n_1,n_2}\nu_{n_1}(dz)-\int_{\mathcal X}f(z)P^\ast_{n_1,n_2}\frac{1}{K}\sum_{i=1}^{K}P(mn_0-in_0,x,n_1,dz)\bigg| \nonumber\\
\leq&\sup_{f:\|f\|\leq 1}\big(1+\frac{C}{1-\gamma}\big)\bigg|\int_{\mathcal X}f(z)\nu_{n_1}(dz)-\int_{\mathcal X}f(z)\frac{1}{K}\sum_{i=1}^{K}P(mn_0-in_0,x,n_1,dz)\bigg| \nonumber\\
\to& 0.\label{eq.n0}
\end{align} 
By \eqref{eq.n0} and \eqref{eq.n1}, $\sup_{f:\|f\|\leq 1}\bigg|\int_{\mathcal X}f(z)P^\ast_{n_1,n_2}\nu_{n_1}(dz)-\int_{\mathcal X}f(z)\nu_{n_2}(dz)\bigg|\leq\epsilon$. Since $\epsilon$ is arbitrary, $\{\nu_{n}\}_{n\in\mathbb{Z}}$ satisfies for any $m<n$ and measurable $f$ with $\|f\|\leq 1$,
\begin{equation}\label{eq.exisnu}
\int_{\mathcal X}f(x)P^\ast_{m,n}\nu_m(dx)=\int_{\mathcal X}f(x)\nu_n(dx).
\end{equation}

We now prove the uniqueness of sequence of probability measures satisfying \eqref{eq.exisnu}. For any two sequences of probability measures $\{\mu_{n}\}_{n\in\mathbb{Z}}$ and $\{\nu_{n}\}_{n\in\mathbb{Z}}$ satisfying for any $n\in\mathbb Z$, $\int_{\mathcal X}V(x)\mu_n(dx)<+\infty$ and $\int_{\mathcal X}V(x)\nu_n(dx)<+\infty$, respectively, it suffices to prove that for any measurable $f$ such that $\|f\|\leq 1$ and $n\in\mathbb Z$,
\begin{equation}\label{eq.unique}
  \bigg|\int_{\mathcal X}f(x)\mu_n(dx)- \int_{\mathcal X}f(x)\nu_n(dx)\bigg|=0.
\end{equation}
For any $n\in \mathbb Z$ and measurable $f$ such that $\|f\|\leq 1$,
\begin{align*}
&\bigg|\int_{\mathcal{X}}f(x)\mu_{n}(dx)-\int_{\mathcal{X}}f(x)\nu_{n}(dx)\bigg|\\
=\:&
\bigg|\int_{\mathcal{X}}\int_{\mathcal{X}}f(z)P(n-kn_{0},x,n,dz)(\mu_{n-kn_{0}}(dx)-\nu_{n-kn_{0}}(dx))\bigg|\\
\leq\:&
\int_{\mathcal{X}}\int_{\mathcal{X}}\bigg|\int_{\mathcal{X}}f(z)P(n-kn_{0},x,n,dz)-\int_{\mathcal{X}}f(z)P(n-kn_{0},y,n,dz)\bigg|\\
\:&
\ \ \ \mu_{n-kn_{0}}(dx)\nu_{n-kn_{0}}(dy).
\end{align*}
Using \eqref{eq.beta1} and \eqref{eq.beta}, it holds that
\begin{align*}
&\int_{\mathcal{X}}\int_{\mathcal{X}}\bigg|\int_{\mathcal{X}}f(z)P(n-kn_{0},x,n,dz)-\int_{\mathcal{X}}f(z)P(n-kn_{0},y,n,dz)\bigg|\\
\:&
\ \ \ \mu_{n-kn_{0}}(dx)\nu_{n-kn_{0}}(dy) \\
\leq&\frac{2+\beta}{\beta}\int_{\mathcal{X}}\int_{\mathcal{X}}\epsilon(2+\beta V(x)+\beta V(y))\mu_{n-kn_{0}}(dx)\nu_{n-kn_{0}}(dy) \\
=&\frac{2+\beta}{\beta}\bigg(2\epsilon+\beta\epsilon\int_{\mathcal{X}}V(x)\mu_{n-kn_{0}}(dx)+\beta\epsilon\int_{\mathcal{X}}V(x)\nu_{n-kn_{0}}(dx)\bigg).
\end{align*}
Since $\epsilon$ and $f$ are arbitrary, we obtain $\bigg|\int_{\mathcal{X}}f(x)\mu_{n}(dx)-\int_{\mathcal{X}}f(x)\nu_{n}(dx)\bigg|=0$. The proof of theorem is complete.
\end{proof}

In the following theorem, we present the exponential ergodicity with the norm $\|\cdot\|$ given in \eqref{eqweinorm} of the unique invariant measure family in discrete time, and the exponential ergodicity with the norm $\|\cdot\|_{TV}$ in discrete time.

\begin{theorem}\label{thmTVdiscrete}
Under the conditions in Theorem \ref{thm-uniquet}, for the unique invariant measure family $\{\mu_{n}\}_{n\in\mathbb{Z}}$,
\begin{enumerate}
  \item there exist constants $0<\alpha<1$ and $M>0$ such that
\begin{equation*}
\bigg\|\int_{\mathcal{X}}\varphi(y)P(n-m,\cdot,n,dy)-\int_{\mathcal{X}}\varphi(y)\mu_{n}(dy)\bigg\|
\leq
M\alpha^{m}\bigg\|\varphi-\int_{\mathcal{X}}\varphi(y)\mu_{n}(dy)\bigg\|,
\end{equation*}
for any $n\in\mathbb{Z}$, $m\in\mathbb{N}$ and any bounded measurable function $\varphi:\mathcal{X}\to\mathbb{R}$;
  \item there exist constants $0<\alpha<1$ and $\tilde{M}>0$ such that
\begin{equation*}
\|P(n-m,x,n,\cdot)-\mu_{n}\|_{TV}
\leq
\tilde{M}\alpha^{m}(1+V(x)),
\end{equation*}
for any $n\in\mathbb{Z}$, $m\in\mathbb{N}$ and $x\in\mathcal{X}$.
\end{enumerate}
\end{theorem}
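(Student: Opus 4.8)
The plan is to deduce (i) from the $n_0$-step contraction of Theorem \ref{Th-rate}, and then to obtain (ii) at once from (i) by specialising to test functions bounded by $1$. Throughout write $P_{n-m,n}\varphi(x):=\int_{\mathcal X}\varphi(y)P(n-m,x,n,dy)$, and use freely that the three norms $\|\cdot\|$, $\|\cdot\|_\beta$, $\vvvert\cdot\vvvert_\beta$ are comparable up to constants depending only on $\beta$: one has $\vvvert\psi\vvvert_\beta\le\max\{1,\beta^{-1}\}\|\psi\|$ directly from the definitions, and $\vvvert\psi\vvvert_\beta=\vvvert\psi-c\vvvert_\beta$ for every constant $c$ since $\vvvert\cdot\vvvert_\beta$ annihilates constants.

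First I would iterate the contraction. Fix $m\ge 1$ (for $m=0$ both assertions are trivial after enlarging the constants) and write $m=kn_0+r$ with integers $k\ge 0$ and $0\le r\le n_0-1$. Applying Theorem \ref{Th-rate} successively for the same $n$ with decreasing $k$, and noting that the $k=1$ instance already refers to $P_{n,n}\varphi=\varphi$, gives $\vvvert P_{n-kn_0,n}\varphi\vvvert_\beta\le\eta^{k}\vvvert\varphi\vvvert_\beta$. To absorb the remaining $r<n_0$ steps, Chapman--Kolmogorov yields $P_{n-m,n}\varphi(x)=\int_{\mathcal X}(P_{n-kn_0,n}\varphi)(w)\,P(n-m,x,n-kn_0,dw)$; picking, via Lemma \ref{Lem-norm}, a constant $c$ with $\|P_{n-kn_0,n}\varphi+c\|_\beta\le 2\vvvert P_{n-kn_0,n}\varphi\vvvert_\beta$ and using the $r$-fold iterate of the drift in Assumption \ref{Assumption-exponential1} (so that $\int_{\mathcal X}V\,dP(n-m,x,n-kn_0,\cdot)\le\gamma^{r}V(x)+C/(1-\gamma)$), one obtains $\vvvert P_{n-m,n}\varphi\vvvert_\beta\le C_0\,\eta^{k}\vvvert\varphi\vvvert_\beta$ with $C_0:=2(1+\beta C/(1-\gamma))$. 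Since $k\ge m/n_0-1$ and $\eta\in(0,1)$, this becomes $\vvvert P_{n-m,n}\varphi\vvvert_\beta\le C_0\eta^{-1}\alpha^{m}\vvvert\varphi\vvvert_\beta$ with $\alpha:=\eta^{1/n_0}\in(0,1)$, the single estimate driving both parts.

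Next I would convert this into the weighted-norm estimate (i). The auxiliary ingredient is the uniform moment bound $V_\ast:=\sup_{n\in\mathbb Z}\int_{\mathcal X}V\,d\mu_n<\infty$, which follows from the construction inside the proof of Theorem \ref{thm-uniquet}: $\mu_n$ is the total-variation limit of $P(n-kn_0,x_0,n,\cdot)$ as $k\to\infty$ for any fixed $x_0$ with $V(x_0)<\infty$, while Assumption \ref{Assumption-exponential1} gives $\int_{\mathcal X}V\,dP(n-kn_0,x_0,n,\cdot)\le\gamma^{kn_0}V(x_0)+C/(1-\gamma)\le V(x_0)+C/(1-\gamma)$; applying this to $V\wedge N$, passing to the limit first in $k$ and then in $N$ yields $\int_{\mathcal X}V\,d\mu_n\le V(x_0)+C/(1-\gamma)$ uniformly in $n$. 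Writing $c_n:=\int_{\mathcal X}\varphi\,d\mu_n$ and using $P^{\ast}_{n-m,n}\mu_{n-m}=\mu_n$ (a consequence of \eqref{eq.discreuni}), we have $P_{n-m,n}\varphi(x)-c_n=\int_{\mathcal X}\big(P_{n-m,n}\varphi(x)-P_{n-m,n}\varphi(z)\big)\mu_{n-m}(dz)$, hence $|P_{n-m,n}\varphi(x)-c_n|\le\vvvert P_{n-m,n}\varphi\vvvert_\beta(2+\beta V(x)+\beta V_\ast)\le(2+\beta V_\ast)\vvvert P_{n-m,n}\varphi\vvvert_\beta(1+V(x))$. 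Dividing by $1+V(x)$, taking the supremum over $x$, and inserting the estimate of the previous paragraph together with $\vvvert\varphi\vvvert_\beta=\vvvert\varphi-c_n\vvvert_\beta\le\max\{1,\beta^{-1}\}\|\varphi-c_n\|$ gives (i).

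Finally, (ii) is immediate: when $\|\varphi\|_\infty\le 1$ one has $\|\varphi\|\le 1$ and $|c_n|\le 1$, so $\|\varphi-c_n\|\le 2$; evaluating the bound of (i) at the point $x$ and taking the supremum over all such $\varphi$ yields $\|P(n-m,x,n,\cdot)-\mu_n\|_{TV}\le 2M\alpha^m(1+V(x))$. The main obstacles are the two points where something beyond Theorem \ref{Th-rate} is needed: keeping the contraction rate intact across the $r<n_0$ leftover steps (handled through Lemma \ref{Lem-norm} and the drift), and the uniform-in-$n$ moment bound $\sup_n\int V\,d\mu_n<\infty$; everything else is routine manipulation of the equivalent norms.
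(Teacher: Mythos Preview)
Your argument is correct and follows essentially the same route as the paper: iterate Theorem~\ref{Th-rate}, use Lemma~\ref{Lem-norm} together with the drift of Assumption~\ref{Assumption-exponential1} to pass from $\vvvert\cdot\vvvert_\beta$ to the weighted norm $\|\cdot\|$ centered at $\int\varphi\,d\mu_n$, and absorb the leftover $r<n_0$ steps by the same drift bound. The only cosmetic differences are the order of the last two operations (you absorb the remainder before converting norms, the paper after) and your explicit use of the uniform moment bound $V_\ast=\sup_n\int V\,d\mu_n<\infty$ in place of the paper's limiting construction $\lim_{m\to\infty}P(n-mn_0,y,n,\cdot)=\mu_n$; both devices encode the same control on $\int V\,d\mu_{n-m}$.
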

\begin{proof}
(i). Fix $\varphi$ with $\vvvert\int_{\mathcal{X}}\varphi(y)P(n-kn_{0},\cdot,n,dy)\vvvert_{\beta}\leq 1$. Then for any $x\neq y$, we have
\begin{equation*}
\bigg|\int_{\mathcal{X}}\varphi(z)P(n-kn_{0},x,n,dz)-\int_{\mathcal{X}}\varphi(z)P(n-kn_{0},y,n,dz)\bigg| 
\leq 
2+\beta V(x)+\beta V(y).
\end{equation*}
For any $y\in\{y:V(y)< \infty\}$, any $x\in\mathcal{X}$ and some $M_{1}\geq \max\{3+\beta C/(1-\gamma),\beta\}$,
\begin{align*}
&\bigg|\int_{\mathcal{X}}\varphi(z)P(n-kn_{0},x,n,dz)-\lim_{m\rightarrow\infty}\int_{\mathcal{X}}\varphi(z)P(n-mn_{0},y,n,dz)\bigg|\\
\leq\:&
\lim_{m\rightarrow\infty}\int_{\mathcal{X}}(2+\beta V(x)+\beta V(q))P(n-mn_{0},y,n-kn_{0},dq)\\
\leq\:&
M_{1}(1+V(x)).
\end{align*}
Then by Lemma \ref{Lem-norm} and Theorem \ref{Th-rate} we obtain for any bounded measurable function $\varphi:\mathcal{X}\to\mathbb{R}$,
\begin{align*}
\bigg\|\int_{\mathcal{X}}\varphi(z)P(n-kn_{0},\cdot,n,dz)-\int_{\mathcal{X}}\varphi(y)\mu_{n}(dy)\bigg\|
\leq\:& 
M_{1}\bigg\vvvert\int_{\mathcal{X}}\varphi(y)P(n-kn_{0},\cdot,n,dy)\bigg\vvvert_{\beta}\\
\leq\:& 
M_{1}\eta^{k}\inf_{c\in\mathbb{R}}\|\varphi+c\|_{\beta}\\
\leq\:& 
M_{1}\eta^{k}\bigg\|\varphi-\int_{\mathcal{X}}\varphi(y)\mu_{n}(dy)\bigg\|_{\beta}.
\end{align*}
Notice that norms $\|\cdot\|$ and $\|\cdot\|_{\beta}$ are equivalent. Thus there exists a constant $M_{2}$ such that
\begin{equation*}
\bigg\|\int_{\mathcal{X}}\varphi(y)P(n-kn_{0},\cdot,n,dy)-\int_{\mathcal{X}}\varphi(y)\mu_{n}(dy)\bigg\|
\leq 
M_{2}\eta^{k}\bigg\|\varphi-\int_{\mathcal{X}}\varphi(y)\mu_{n}(dy)\bigg\|.
\end{equation*}

It holds that for any $m\in\mathbb{N}$, $m=kn_{0}+q$ with some $k\in\mathbb{N}$ and $q\in\{0,\ldots,n_{0}-1\}$. Fix $\varphi$ with $\|\varphi-\int_{\mathcal{X}}\varphi(y)\mu_{n}(dy)\|\leq 1$. Then we have for $M_{3}\geq 1+C/(1-\gamma)$,
\begin{equation*}
\bigg|\int_{\mathcal{X}}\varphi(y)P(n-q,x,n,dy)-\int_{\mathcal{X}}\varphi(y)\mu_{n}(dy)\bigg|
\leq 
M_{3}(1+V(x)).
\end{equation*}
Therefore, for $\alpha=\eta^{1/n_{0}}$, some $M_{4}\geq M_{2}M_{3}/\eta$ and any bounded measurable function $\varphi:\mathcal{X}\to\mathbb{R}$,
\begin{align*}
\bigg\|\int_{\mathcal{X}}\varphi(y)(P(n-m,\cdot,n,dy)-\mu_{n}(dy))\bigg\|
\leq\:& M_{2}\eta^{k}\bigg\|\int_{\mathcal{X}}\varphi(y)(P(n-q,\cdot,n,dy)-\mu_{n}(dy))\bigg\|\\
\leq\:& 
M_{2}M_{3}\eta^{k}\bigg\|\varphi-\int_{\mathcal{X}}\varphi(y)\mu_{n}(dy)\bigg\|\\
\leq\:
& M_{4}\alpha^{m}\bigg\|\varphi-\int_{\mathcal{X}}\varphi(y)\mu_{n}(dy)\bigg\|.
\end{align*}
The theorem is completely proved.
\medskip

\indent
(ii). By item (i), there exist constants $0<\alpha<1$ and $M>0$ such that for some $\tilde{M}>0$ and any $\varphi:\mathcal{X}\to\mathbb{R}$ with $\|\varphi\|_{\infty}\leq 1$,
\begin{align*}
\sup_{x\in\mathcal{X}}\frac{|\int_{\mathcal{X}}\varphi(z)P(n-m,x,n,dz)-\int_{\mathcal{X}}\varphi(z)\mu_{n}(dz)|}{1+V (x)}
\leq& 
M\alpha^{m}\sup_{x\in\mathcal{X}}\bigg|\varphi(x)-\int_{\mathcal{X}}\varphi(z)\mu_{n}(dz)\bigg|\\
\leq& 
\tilde{M}\alpha^{m}.
\end{align*}
Therefore, it holds that for any $x\in\mathcal{X}$,
\begin{align*}
\|P(n-m,x,n,\cdot)-\mu_{n}\|_{TV}
=&
\sup_{\varphi:\|\varphi\|_{\infty}\leq1}\bigg|\int_{\mathcal{X}}\varphi(z)P(n-m,x,n,dz)-\int_{\mathcal{X}}\varphi(z)\mu_{n}(dz)\bigg|\\
\leq&
\tilde{M}\alpha^{m}(1+V(x)).
\end{align*}
\end{proof}

\begin{remark}\label{Rem-TV}\rm
Under the conditions of Theorem \ref{thmTVdiscrete}, it holds that
\begin{equation*}
\lim_{m\rightarrow\infty}\bigg|\int_{\mathcal{X}}f(y)P(n-m,x,n,dy)-\int_{\mathcal{X}}f(y)\mu_{n}(dy)\bigg| = 0,
\end{equation*}
for any $n\in\mathbb{Z}$, $x\in\{x:V(x)<\infty\}$ and $f:\mathcal{X}\rightarrow\mathbb{R}$ with $\|f\|_{\infty}\leq 1$. Therefore, for any $x\in\{x:V(x)<\infty\}$ and $n\in\mathbb{Z}$,
\begin{equation*}
\lim_{m\rightarrow\infty}P(n-m,x,n,\cdot) = \mu_{n}.
\end{equation*}
\end{remark}

Under the following two assumptions similar to Assumptions \ref{Assumption-exponential1} and \ref{Assumption-exponential2}, respectively, we can establish the exponential ergodicity of invariant measure families for Markov processes.

\begin{assumption}\label{Assumption-exponential1con}\rm
There exists a Borel measurable function $V:\mathcal{X}\to [0,\infty]$ such that,
\begin{enumerate}
\item the set $\{x:V(x)<\infty\}$ is nonempty;
\item for any $t\in\mathbb{R}^{+}$, there exist $0<\gamma=\gamma(t)<1$ and $C>0$ such that for any $x\in \mathcal{X}$ and  $s\in\mathbb{R}$,
      \begin{equation*}
      \int_{\mathcal{X}}V(y)P(s-t,x,s,dy) \leq \gamma V(x)+C.
      \end{equation*}
\end{enumerate}
\end{assumption}

For any $R>0$, define the set $\mathcal{C}_{R}=\{(x,y):V(x)+V(y)\leq R\}$ as before.

\begin{assumption}\label{Assumption-exponential2con}\rm
For any $R>0$, there exist constants $t_{0}=t_0(R)\in\mathbb{R}^{+}$ and $0<\delta=\delta(R)<1$ such that for any $t\in\mathbb{R}$,
\begin{equation}\label{eqassumption318}
\sup_{(x,y)\in\mathcal{C}_{R}}\|P(t-t_{0},x,t,\cdot)-P(t-t_{0},y,t,\cdot)\|_{TV} \leq 2(1-\delta).
\end{equation}
\end{assumption}

\begin{theorem}\label{thm-uniquecontinuous}
Suppose Assumptions \ref{Assumption-exponential1con} and \ref{Assumption-exponential2con} hold. Then there exists a unique invariant measure family $\{\mu_{t}\}_{t\in\mathbb{R}}$ for $P$ satisfying $\int_{\mathcal X}V(x)\mu_t(dx)<+\infty$ for any $t\in\mathbb R$.
\end{theorem}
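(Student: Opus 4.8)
The plan is to reduce to the discrete-time result, Theorem~\ref{thm-uniquet}, by passing to the skeleton Markov chain obtained from $\{X(t);t\ge s\}$ by sampling at integer times: regard $\tilde P(m,x,n,\cdot):=P(m,x,n,\cdot)$, $m\le n$ in $\mathbb Z$, as the transition probabilities of an inhomogeneous Markov chain on $\mathcal X$. I will (a) check that $\tilde P$ satisfies Assumptions~\ref{Assumption-exponential1} and~\ref{Assumption-exponential2} with the same function $V$; (b) apply Theorem~\ref{thm-uniquet} to obtain a unique sequence $\{\mu_n\}_{n\in\mathbb Z}$ with $\int_{\mathcal X}V\,d\mu_n<\infty$ obeying \eqref{eq.discreuni}; and (c) extend $\{\mu_n\}$ to a family indexed by $\mathbb R$ and verify invariance and uniqueness using only the Chapman--Kolmogorov equation for $P$.

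For (a), Assumption~\ref{Assumption-exponential1} for $\tilde P$ is precisely Assumption~\ref{Assumption-exponential1con} with $t=1$ (with $\gamma=\gamma(1)<1$ and the drift constant $C$). The substantive point is Assumption~\ref{Assumption-exponential2}. Given $R>0$, set $R_{1}:=2(R+2C)$, let $t_{0}:=t_{0}(R_{1})$ and $\delta_{1}:=\delta(R_{1})$ be furnished by Assumption~\ref{Assumption-exponential2con} at level $R_{1}$, and put $n_{0}:=\lceil t_{0}\rceil\in\mathbb{N}^{+}$ and $h:=n_{0}-t_{0}\in[0,1)$. For $(x,y)\in\mathcal C_{R}$ and $n\in\mathbb Z$, inserting the intermediate time $n-t_{0}$ via Chapman--Kolmogorov and the definition of the total variation norm yields
\[
\|P(n-n_{0},x,n,\cdot)-P(n-n_{0},y,n,\cdot)\|_{TV}
\le\int_{\mathcal X}\!\int_{\mathcal X}\big\|P(n-t_{0},z,n,\cdot)-P(n-t_{0},w,n,\cdot)\big\|_{TV}\,\mu_{x}(dz)\,\mu_{y}(dw),
\]
where $\mu_{x}:=P(n-n_{0},x,n-t_{0},\cdot)$ and $\mu_{y}:=P(n-n_{0},y,n-t_{0},\cdot)$. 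By Assumption~\ref{Assumption-exponential1con}(ii) with $t=h$ one has $\int V\,d\mu_{x}+\int V\,d\mu_{y}\le V(x)+V(y)+2C\le R+2C=R_{1}/2$, so Markov's inequality gives $(\mu_{x}\otimes\mu_{y})(\mathcal C_{R_{1}}^{c})\le 1/2$. On $\mathcal C_{R_{1}}$ the integrand equals $\|P(n-t_{0}(R_{1}),z,n,\cdot)-P(n-t_{0}(R_{1}),w,n,\cdot)\|_{TV}\le 2(1-\delta_{1})$ by Assumption~\ref{Assumption-exponential2con} at level $R_{1}$, and off $\mathcal C_{R_{1}}$ it is at most $2$; combining, the left side is at most $2-\delta_{1}=2(1-\delta_{1}/2)$. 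Thus \eqref{2TV} holds for $\tilde P$ with $n_{0}$ and $\delta_{1}/2$, which depend only on $R$ and not on $n$.

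For (b)--(c): Theorem~\ref{thm-uniquet} now gives a unique $\{\mu_{n}\}_{n\in\mathbb Z}$ with $\int V\,d\mu_{n}<\infty$ satisfying \eqref{eq.discreuni}; testing \eqref{eq.discreuni} against indicator functions shows $P^{\ast}_{m,n}\mu_{m}=\mu_{n}$ as measures for integers $m<n$. For $t\in\mathbb R$, pick an integer $n\le t$ and set $\mu_{t}:=P^{\ast}_{n,t}\mu_{n}$; Chapman--Kolmogorov together with $P^{\ast}_{m,n}\mu_{m}=\mu_{n}$ makes this independent of the choice of $n$, and $\int V\,d\mu_{t}\le\int V\,d\mu_{n}+C<\infty$ by Assumption~\ref{Assumption-exponential1con}(ii). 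For $s\le t$, choosing an integer $n\le s$ gives $P^{\ast}_{s,t}\mu_{s}=P^{\ast}_{s,t}P^{\ast}_{n,s}\mu_{n}=P^{\ast}_{n,t}\mu_{n}=\mu_{t}$, so $\{\mu_{t}\}_{t\in\mathbb R}$ is an invariant measure family integrating $V$. For uniqueness, if $\{\nu_{t}\}_{t\in\mathbb R}$ is any invariant measure family with $\int V\,d\nu_{t}<\infty$, then $\{\nu_{n}\}_{n\in\mathbb Z}$ satisfies \eqref{eq.discreuni} (both sides are finite since $1+V$ is integrable against $\nu_{n}=P^{\ast}_{m,n}\nu_{m}$), so $\nu_{n}=\mu_{n}$ for all $n$ by the uniqueness in Theorem~\ref{thm-uniquet}, whence $\nu_{t}=P^{\ast}_{n,t}\nu_{n}=\mu_{t}$ for every $t$.

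The main obstacle is step (a), specifically the verification of Assumption~\ref{Assumption-exponential2} for the skeleton: the contraction lag $t_{0}(R)$ in Assumption~\ref{Assumption-exponential2con} depends on the level $R$ and need not be an integer, so it cannot be quoted directly for the integer skeleton. The remedy, as above, is to fix the \emph{target} level $R_{1}$ first --- large enough, using only $R$ and the drift constant $C$, that the remaining-time law stays in $\mathcal C_{R_{1}}$ with probability at least $1/2$ --- then to take the lag $t_{0}(R_{1})$, round it up to $n_{0}=\lceil t_{0}(R_{1})\rceil$, and absorb the surplus time $h=n_{0}-t_{0}(R_{1})\in[0,1)$ into the drift estimate; choosing $R_{1}$ before $t_{0}$ and $n_{0}$ breaks the apparent circularity between level and lag. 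Everything else reduces to routine Chapman--Kolmogorov bookkeeping together with the already-proven discrete theorem.
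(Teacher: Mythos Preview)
Your proof is correct and follows the same overall strategy as the paper: reduce to the integer skeleton, apply Theorem~\ref{thm-uniquet}, then extend to real times by $\mu_t:=P^{\ast}_{n,t}\mu_n$. The one substantive difference is in step~(a), the verification of Assumption~\ref{Assumption-exponential2} for the skeleton. You absorb the rounding error $h=n_0-t_0\in[0,1)$ at the \emph{beginning} of the time interval; this forces you to control where the process lands after time $h$, hence the enlargement $R\mapsto R_1=2(R+2C)$, the drift bound on $\int V\,d\mu_x+\int V\,d\mu_y$, and the Markov-inequality split into $\mathcal C_{R_1}$ and its complement. The paper instead absorbs the surplus time at the \emph{end}: with $n_0<t_0\le n_0+1$, it simply notes that pushing a signed measure forward by $P^{\ast}_{t,\,t+(n_0+1-t_0)}$ cannot increase total variation, so
\[
\|P(t-t_0,x,t-t_0+n_0+1,\cdot)-P(t-t_0,y,t-t_0+n_0+1,\cdot)\|_{TV}\le\|P(t-t_0,x,t,\cdot)-P(t-t_0,y,t,\cdot)\|_{TV}\le 2(1-\delta),
\]
which gives Assumption~\ref{Assumption-exponential2} with the \emph{same} $R$ and $\delta$, no level-set enlargement needed. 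Their argument is shorter and does not touch the Lyapunov drift at all in this step; yours is more robust in the sense that it would survive even if the forward semigroup were not a TV-contraction (e.g.\ for sub-Markov kernels), but here that generality buys nothing.

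Conversely, your handling of steps~(b)--(c) is cleaner than the paper's. You invoke only the \emph{statement} of Theorem~\ref{thm-uniquet} and conclude well-definedness and invariance of $\mu_t=P^{\ast}_{n,t}\mu_n$ directly from Chapman--Kolmogorov and the measure identity $P^{\ast}_{m,n}\mu_m=\mu_n$ (obtained by testing \eqref{eq.discreuni} on indicators). The paper instead re-enters the proof of Theorem~\ref{thm-uniquet}, carrying the Ces\`aro averages $\frac{1}{K}\sum_{i=1}^K P(zn_0-in_0,x,\cdot,\cdot)$ through several limiting estimates to establish the same facts. Both are valid; yours is the more economical bookkeeping.
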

\begin{proof}
By Assumption \ref{Assumption-exponential2con}, for any $R>0$, there exist constants $t_{0}\in\mathbb{R}^{+}$ and $0<\delta<1$ such that \eqref{eqassumption318} is satisfied.
Notice that $n_{0}< t_{0}\leq n_{0}+1$ for a unique $n_{0}\in\mathbb{N}$.
It holds that for any $t\in\mathbb{R}$ and $R>0$,
\begin{align*}
&\sup_{(x,y)\in\mathcal{C}_{R}}\|P(t-t_{0},x,t-t_{0}+n_{0}+1,\cdot)-P(t-t_{0},y,t-t_{0}+n_{0}+1,\cdot)\|_{TV}\\
\leq\:&
\sup_{(x,y)\in\mathcal{C}_{R}}\sup_{f:\|f\|_{\infty}\leq1}\bigg|\int_{\mathcal{X}}f(z)P(t-t_{0},x,t,dz)-\int_{\mathcal{X}}f(z)P(t-t_{0},y,t,dz)\bigg|\\
\leq\:& 
2(1-\delta).
\end{align*}
Then we have for $t\in\mathbb{R}$,
\begin{equation*}
\sup_{(x,y)\in\mathcal{C}_{R}}\|P(t-n_{0}-1,x,t,\cdot)-P(t-n_{0}-1,y,t,\cdot)\|_{TV} \leq 2(1-\delta).
\end{equation*}
Thus Assumption \ref{Assumption-exponential2} is satisfied. 

By the arguments before \eqref{eq.exisnu} in the proof of Theorem \ref{thm-uniquet}, there exists $\{\nu_n\}_{n\in\mathbb Z}$ satisfying \eqref{eq.discreuni}.
We first prove the existence of invariant measure family $\{\nu_t\}_t$ satisfying for any $t\in\mathbb R$, $\int_{\mathcal X}V(x)\nu_t(dx)<+\infty$. Notice that for any $t\in\mathbb{R}$, there exists $n\in\mathbb{Z}$ such that $t\geq n$. Define $\mu_t=P^\ast_{n,t}\nu_n$. As in the proof of Theorem \ref{thm-uniquet}, we will prove $\mu_t$ is well defined and then $\{\mu_t\}$ is an invariant measure family. We set $\tilde{\mu}_t=P^\ast_{m,t}\nu_m$ for some integer $m$ satisfying $m\neq n$ and $m\leq t$. Let us set $m\leq n$. Notice that there exists $z\in\mathbb Z$ such that $zn_0\leq m$. For measurable $f$,
\begin{align*}
&\sup_{\|f\|\leq 1}\bigg|\int_{\mathcal X}f(z)\tilde{\mu}_t(dz)-\int_{\mathcal X}f(z)\mu_t(dz)\bigg| \\
=&\sup_{\|f\|\leq 1}\bigg|\int_{\mathcal X}f(z)P^\ast_{m,t}\nu_m(dz)-\int_{\mathcal X}f(z)P^\ast_{n,t}\nu_n(dz)\bigg| \\
\leq&\sup_{\|f\|\leq 1}\bigg|\int_{\mathcal X}f(z)P^\ast_{m,t}\nu_m(dz)-\int_{\mathcal X}f(z)P^\ast_{m,t}\frac{1}{K}\sum_{i=1}^{K}P(zn_0-in_0,x,m,dz)\bigg| \\
&+\sup_{\|f\|\leq 1}\bigg|\int_{\mathcal X}f(z)P^\ast_{n,t}\nu_n(dz)-\int_{\mathcal X}f(z)P^\ast_{n,t}\frac{1}{K}\sum_{i=1}^{K}P(zn_0-in_0,x,n,dz)\bigg| \\
&+\sup_{\|f\|\leq 1}\bigg|\int_{\mathcal X}f(z)P^\ast_{m,t}\frac{1}{K}\sum_{i=1}^{K}P(zn_0-in_0,x,m,dz) \\
&-\int_{\mathcal X}f(z)P^\ast_{n,t}\frac{1}{K}\sum_{i=1}^{K}P(zn_0-in_0,x,n,dz)\bigg| \\
=&\sup_{\|f\|\leq 1}\bigg|\int_{\mathcal X}f(z)P^\ast_{m,t}\nu_m(dz)-\int_{\mathcal X}f(z)P^\ast_{m,t}\frac{1}{K}\sum_{i=1}^{K}P(zn_0-in_0,x,m,dz)\bigg| \\
&+\sup_{\|f\|\leq 1}\bigg|\int_{\mathcal X}f(z)P^\ast_{n,t}\nu_n(dz)-\int_{\mathcal X}f(z)P^\ast_{n,t}\frac{1}{K}\sum_{i=1}^{K}P(zn_0-in_0,x,n,dz)\bigg|.
\end{align*} 
Using $\|f\|\leq 1$ and Assumption \ref{Assumption-exponential1con}, we have for $m\in\mathbb Z$ and $z\in\mathcal X$, 
\begin{equation*}
\bigg|\int_{\mathcal X}f(y)P(m,z,t,dy)\bigg|\leq 1+V(x)+C.
\end{equation*}
Thus we obtain
\begin{equation}\label{eq.t1}
\bigg\|\int_{\mathcal X}\frac{1}{1+C}f(y)P(m,\cdot,t,dy)\bigg\|\leq 1.
\end{equation}
Combining \eqref{eq.t1} and \eqref{eq.exis}, as $K\to+\infty$,
\begin{align*}
 &\sup_{\|f\|\leq 1}\bigg|\int_{\mathcal X}f(z)P^\ast_{m,t}\nu_m(dz)-\int_{\mathcal X}f(z)P^\ast_{m,t}\frac{1}{K}\sum_{i=1}^{K}P(zn_0-in_0,x,m,dz)\bigg| \\
&+\sup_{\|f\|\leq 1}\bigg|\int_{\mathcal X}f(z)P^\ast_{n,t}\nu_n(dz)-\int_{\mathcal X}f(z)P^\ast_{n,t}\frac{1}{K}\sum_{i=1}^{K}P(zn_0-in_0,x,n,dz)\bigg| \\
\leq&\sup_{\|f\|\leq 1}(1+C)\bigg|\int_{\mathcal X}f(z)\nu_m(dz)-\int_{\mathcal X}f(z)\frac{1}{K}\sum_{i=1}^{K}P(zn_0-in_0,x,m,dz)\bigg| \\
&+\sup_{\|f\|\leq 1}(1+C)\bigg|\int_{\mathcal X}f(z)\nu_n(dz)-\int_{\mathcal X}f(z)\frac{1}{K}\sum_{i=1}^{K}P(zn_0-in_0,x,n,dz)\bigg| \\
\to& 0.
\end{align*}
Thus $\sup_{\|f\|\leq 1}\bigg|\int_{\mathcal X}f(z)\tilde{\mu}_t(dz)-\int_{\mathcal X}f(z)\mu_t(dz)\bigg|=0$. We have the definition of $\mu_t$ is independent of the choice of $n$ and thus $\mu_t$ is well defined.

For any $s\leq t$, there exists $n\in\mathbb Z$ such that $n\leq s$. We have for measurable $f$, using the definition of $\{\mu_t\}_{t\in\mathbb R}$,
\begin{align*}
&\sup_{\|f\|\leq 1}\bigg|\int_{\mathcal X}f(x)P^\ast_{s,t}\mu_s(dx)-\int_{\mathcal X}f(x)\mu_t(dx)\bigg| \\
\leq&\sup_{\|f\|\leq 1}\bigg|\int_{\mathcal X}f(x)P^\ast_{s,t}\mu_s(dx)-\int_{\mathcal X}f(x)P^\ast_{s,t}P^\ast_{n,s}\nu_n(dx)\bigg|
+\sup_{\|f\|\leq 1}\bigg|\int_{\mathcal X}f(x)P^\ast_{s,t}P^\ast_{n,s}\nu_n(dx) \\
&-\int_{\mathcal X}f(x)P^\ast_{n,t}\nu_n(dx)\bigg|
+\sup_{\|f\|\leq 1}\bigg|\int_{\mathcal X}f(x)P^\ast_{n,t}\nu_n(dx)-\int_{\mathcal X}f(x)\mu_t(dx)\bigg| \\
=&\sup_{\|f\|\leq 1}\bigg|\int_{\mathcal X}f(x)P^\ast_{s,t}\mu_s(dx)-\int_{\mathcal X}f(x)P^\ast_{s,t}P^\ast_{n,s}\nu_n(dx)\bigg|.
\end{align*}
With the similar arguments as for \eqref{eq.t1}, it holds that 
\begin{equation}\label{eq.t2}
\bigg\|\int_{\mathcal X}\frac{1}{1+C}f(y)P(s,\cdot,t,dy)\bigg\|\leq 1.
\end{equation}
Combining \eqref{eq.t2} and the definition of $\{\mu_t\}_{t\in\mathbb R}$, we obtain
\begin{align*}
&\sup_{\|f\|\leq 1}\bigg|\int_{\mathcal X}f(x)P^\ast_{s,t}\mu_s(dx)-\int_{\mathcal X}f(x)P^\ast_{s,t}P^\ast_{n,s}\nu_n(dx)\bigg| \\
\leq&\sup_{\|f\|\leq 1}(1+C)\bigg|\int_{\mathcal X}f(x)\mu_s(dx)-\int_{\mathcal X}f(x)P^\ast_{n,s}\nu_n(dx)\bigg| \\
=&0.
\end{align*}
Thus $\sup_{\|f\|\leq 1}\bigg|\int_{\mathcal X}f(x)P^\ast_{s,t}\mu_s(dx)-\int_{\mathcal X}f(x)\mu_t(dx)\bigg|=0$. We obtain $\{\mu_t\}_{t\in\mathbb R}$ is an invariant measure family.

We now prove the uniqueness of invariant measure family. For any two invariant measure families $\{\mu_{t}\}_{t\in\mathbb{R}}$ and $\{\nu_{t}\}_{t\in\mathbb{R}}$ satisfying for any $t\in\mathbb R$, $\int_{\mathcal X}V(x)\mu_t(dx)<+\infty$ and $\int_{\mathcal X}V(x)\nu_t(dx)<+\infty$, respectively, it suffices to prove that for any measurable $f$ such that $\|f\|\leq 1$,
\begin{equation}\label{eq.unique0}
  \bigg|\int_{\mathcal X}f(x)\mu_t(dx)- \int_{\mathcal X}f(x)\nu_t(dx)\bigg|=0.
\end{equation}
For any $t\in\mathbb R$, there exists a unique $n\in\mathbb Z$ such that $n\leq t<n+1$, and for measurable $f$,
\begin{equation*}
\sup_{f:\|f\|\leq 1}\bigg|\int_{\mathcal X}f(x)\mu_t(dx)-\int_{\mathcal X}f(x)\int_{\mathcal X}P(n,y,t,dx)\mu_n(dy)\bigg|=0,
\end{equation*}
\begin{equation*}
\sup_{f:\|f\|\leq 1}\bigg|\int_{\mathcal X}f(x)\nu_t(dx)-\int_{\mathcal X}f(x)\int_{\mathcal X}P(n,y,t,dx)\nu_n(dy)\bigg|=0.
\end{equation*}
Therefore, 
\begin{align}
&\sup_{f:\|f\|\leq 1}\bigg|\int_{\mathcal X}f(x)\mu_t(dx)-\int_{\mathcal X}f(x)\nu_t(dx)\bigg|\nonumber \\
\leq&\sup_{f:\|f\|\leq 1}\bigg|\int_{\mathcal X}f(x)\mu_t(dx)-\int_{\mathcal X}f(x)\int_{\mathcal X}P(n,y,t,dx)\mu_n(dy)\bigg|\nonumber \\
&+\sup_{f:\|f\|\leq 1}\bigg|\int_{\mathcal X}f(x)\nu_t(dx)-\int_{\mathcal X}f(x)\int_{\mathcal X}P(n,y,t,dx)\nu_n(dy)\bigg|\nonumber \\
&+\sup_{f:\|f\|\leq 1}\bigg|\int_{\mathcal X}f(x)\int_{\mathcal X}P(n,y,t,dx)\mu_n(dy)-\int_{\mathcal X}f(x)P(n,y,t,dx)\nu_n(dy)\bigg|\nonumber \\
=&\sup_{f:\|f\|\leq 1}\bigg|\int_{\mathcal X}f(x)\int_{\mathcal X}P(n,y,t,dx)\mu_n(dy)-\int_{\mathcal X}f(x)P(n,y,t,dx)\nu_n(dy)\bigg|.\label{eq.uniquet}
\end{align}
Since $\|f\|\leq 1$, using the Assumption \eqref{Assumption-exponential1con} and the fact $\gamma<1$, we have for any $x\in\mathcal X$,
\begin{align*}
\frac{1}{1+C}\bigg|\int_{\mathcal X}f(y)P(n,x,t,dy)\bigg|
\leq&\frac{1}{1+C}\int_{\mathcal X}|f(y)|P(n,x,t,dy) \\
\leq&\frac{1}{1+C}\int_{\mathcal X}(1+V(y))P(n,x,t,dy) \\
\leq&\frac{1}{1+C}(1+V(x)+C)\\
\leq&1+V(x).
\end{align*}
Thus 
\begin{equation}\label{eq.leq1}
\bigg\|\frac{1}{1+C}\int_{\mathcal X}f(y)P(n,\cdot,t,dy)\bigg\|\leq 1.
\end{equation}
Using \eqref{eq.uniquet}, \eqref{eq.leq1} and \eqref{eq.unique}, we obtain \eqref{eq.unique0}. The invariant measure family is unique.
\end{proof}

We present the exponential ergodicity with the norm $\|\cdot\|$ given in \eqref{eqweinorm} of the unique invariant measure family, and the exponential ergodicity with the norm $\|\cdot\|_{TV}$.

\begin{theorem}\label{thmTV}
Under the conditions in Theorem \ref{thm-uniquecontinuous}, for the unique invariant measure family $\{\nu_{t}\}_{t\in\mathbb{R}}$,
\begin{enumerate}
  \item there exist constants $0<\alpha<1$ and $M>0$ such that
\begin{equation*}
\bigg\|\int_{\mathcal{X}}\varphi(y)P(\tau-t,\cdot,\tau,dy)-\int_{\mathcal{X}}\varphi(y)\mu_{\tau}(dy)\bigg\|
\leq
M\alpha^{t}\bigg\|\varphi-\int_{\mathcal{X}}\varphi(y)\mu_{\tau}(dy)\bigg\|,
\end{equation*}
for any $\tau\in\mathbb{R}$, $t\geq 0$ and any bounded measurable function $\varphi:\mathcal{X}\to\mathbb{R}$;
  \item there exist constants $0<\alpha<1$ and $\tilde{M}>0$ such that
\begin{equation}\label{eqTV}
\|P(\tau-t,x,\tau,\cdot)-\mu_{\tau}\|_{TV}
\leq
\tilde{M}\alpha^{t}(1+V(x)),
\end{equation}
for any $\tau\in\mathbb{R}$, $t\geq 0$ and $x\in\mathcal{X}$.
\end{enumerate}
\end{theorem}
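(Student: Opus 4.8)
The plan is to reduce the statement to its discrete-time counterpart, Theorem~\ref{thmTVdiscrete}, by anchoring $\tau-t$ and $\tau$ to the integer lattice. Recall from the proof of Theorem~\ref{thm-uniquecontinuous} that Assumptions~\ref{Assumption-exponential1con} and \ref{Assumption-exponential2con} force Assumptions~\ref{Assumption-exponential1} and \ref{Assumption-exponential2} for the chain sampled at integer times (take $t=1$ in Assumption~\ref{Assumption-exponential1con}, so $\int_{\mathcal X}V(y)P(n-1,x,n,dy)\le\gamma(1)V(x)+C$ with $\gamma(1)<1$, and $n_{0}+1$ with $n_{0}=\lfloor t_{0}\rfloor$ in Assumption~\ref{Assumption-exponential2con}), and that the restriction to $\mathbb Z$ of the unique family $\{\mu_{t}\}_{t\in\mathbb R}$ agrees with the unique discrete invariant family $\{\mu_{n}\}_{n\in\mathbb Z}$ of Theorem~\ref{thm-uniquet}. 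Hence Theorem~\ref{thmTVdiscrete}(i) applies to $\{\mu_{n}\}_{n\in\mathbb Z}$, giving $0<\alpha_{0}<1$ and $M_{0}>0$ with
\begin{equation*}
\Big\|\int_{\mathcal X}\psi(y)P(n-k,\cdot,n,dy)-\int_{\mathcal X}\psi(y)\mu_{n}(dy)\Big\|\le M_{0}\alpha_{0}^{k}\Big\|\psi-\int_{\mathcal X}\psi(y)\mu_{n}(dy)\Big\|
\end{equation*}
for every $n\in\mathbb Z$, $k\in\mathbb N$ and bounded measurable $\psi$. The remaining work is to transport this bound off the lattice, absorbing the two fractional-time transitions into multiplicative constants via Assumption~\ref{Assumption-exponential1con}. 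For $0\le t\le 3$ I would argue trivially: $\int_{\mathcal X}(1+V(y))P(\tau-t,x,\tau,dy)\le(1+C)(1+V(x))$ proves (i) with constant a multiple of $\alpha_{0}^{-3}\alpha_{0}^{t}$, and $\|P(\tau-t,x,\tau,\cdot)-\mu_{\tau}\|_{TV}\le 2$ proves (ii); so I may assume $t\ge 3$.

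For $t\ge 3$ I would set $\ell:=\lceil\tau-t\rceil$ and $n':=\lfloor\tau\rfloor$, so that $\tau-t\le\ell<n'\le\tau$ and $m:=n'-\ell\in\mathbb N$ obeys $m\ge t-2$. Writing $g:=P_{n',\tau}\varphi$ for a bounded measurable $\varphi$, one has $\|g\|_{\infty}\le\|\varphi\|_{\infty}$ and, since $\mu_{\tau}=P^{\ast}_{n',\tau}\mu_{n'}$, $\int_{\mathcal X}g\,d\mu_{n'}=\int_{\mathcal X}\varphi\,d\mu_{\tau}=:c_{\varphi}$. Two applications of Chapman--Kolmogorov then give
\begin{equation*}
\int_{\mathcal X}\varphi\,dP(\tau-t,x,\tau,\cdot)-c_{\varphi}=\int_{\mathcal X}\Big(\int_{\mathcal X}g(u)\big(P(n'-m,w,n',du)-\mu_{n'}(du)\big)\Big)P(\tau-t,x,\ell,dw),
\end{equation*}
and the displayed discrete contraction (with $\psi=g$, $n=n'$, $k=m$) bounds the inner integral in modulus by $M_{0}\alpha_{0}^{m}\|g-c_{\varphi}\|(1+V(w))$.

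The two fractional transitions are controlled by Assumption~\ref{Assumption-exponential1con}: with $t'=\tau-n'\in[0,1)$ (trivial when $t'=0$) and $\gamma(t')<1$, one obtains $\int_{\mathcal X}(1+V(z))P(n',u,\tau,dz)\le(1+C)(1+V(u))$, whence $\|g-c_{\varphi}\|\le(1+C)\|\varphi-c_{\varphi}\|$; similarly $\int_{\mathcal X}V(w)P(\tau-t,x,\ell,dw)\le V(x)+C$. Integrating the previous display over $w$, dividing by $1+V(x)$, taking the supremum in $x$, and using $\alpha_{0}^{m}\le\alpha_{0}^{t-2}=\alpha_{0}^{-2}\alpha_{0}^{t}$ would yield
\begin{equation*}
\Big\|\int_{\mathcal X}\varphi(y)P(\tau-t,\cdot,\tau,dy)-\int_{\mathcal X}\varphi(y)\mu_{\tau}(dy)\Big\|\le M_{0}(1+C)^{2}\alpha_{0}^{-2}\,\alpha_{0}^{t}\Big\|\varphi-\int_{\mathcal X}\varphi(y)\mu_{\tau}(dy)\Big\|,
\end{equation*}
i.e. (i) with $\alpha=\alpha_{0}$ and $M$ the larger of $M_{0}(1+C)^{2}\alpha_{0}^{-2}$ and the constant from the case $t\le 3$. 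Part (ii) would then follow from (i) exactly as in the proof of Theorem~\ref{thmTVdiscrete}: for $\|\varphi\|_{\infty}\le1$ one has $|\varphi(x)-c_{\varphi}|\le 2\le 2(1+V(x))$, so $\|\varphi-c_{\varphi}\|\le2$, and taking the supremum over such $\varphi$ gives $\|P(\tau-t,x,\tau,\cdot)-\mu_{\tau}\|_{TV}\le 2M\alpha^{t}(1+V(x))$, which is \eqref{eqTV} with $\tilde M=2M$.

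I do not expect a genuine obstacle here: the substantive content --- exponential contraction on the integer lattice --- is already Theorems~\ref{Th-rate} and \ref{thmTVdiscrete}, and what remains is essentially bookkeeping. The points that do need care are the choice of anchors $\ell=\lceil\tau-t\rceil$ and $n'=\lfloor\tau\rfloor$ (ensuring a stretch of $\ge t-2$ integer steps of the sampled chain between them), the verification that the two duration-$<1$ end transitions contribute only multiplicative constants via Assumption~\ref{Assumption-exponential1con} rather than spoiling the exponential rate, and the separate trivial treatment of small $t$, where no integer step of the sampled chain fits between $\tau-t$ and $\tau$.
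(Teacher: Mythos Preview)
Your proposal is correct and follows essentially the same approach as the paper: both reduce to the discrete-time contraction of Theorem~\ref{thmTVdiscrete} and absorb the leftover fractional-time transitions as multiplicative constants via Assumption~\ref{Assumption-exponential1con}, with part~(ii) deduced from part~(i) exactly as in Theorem~\ref{thmTVdiscrete}. The only organizational difference is that the paper first establishes the bound for integer $t$ (with arbitrary $\tau$, anchoring at $n=\lfloor\tau\rfloor$) and then handles the fractional part of $t$ by a further application with $\tau$ replaced by $\tau-s$, whereas you anchor both endpoints $\lceil\tau-t\rceil$ and $\lfloor\tau\rfloor$ to the lattice in a single step and treat small $t$ separately; this is arguably a bit more direct but not materially different.
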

\begin{proof}
(i). For any $\tau\in\mathbb{R}$, we have $\tau\in[n,n+1)$ with a unique $n\in\mathbb{Z}$. Then for any $m\in\mathbb{N}$, it holds that
\begin{equation*}
n-m-1 \leq \tau-m-1 < n-m \leq n \leq \tau < n+1.
\end{equation*}
By item (i) in Theorem \ref{thmTVdiscrete}, there exist constants $0<\alpha<1$ and $M>0$ such that for any bounded function $\varphi:\mathcal{X}\to\mathbb{R}$,
\begin{align*}
&\bigg\|\int_{\mathcal{X}}\varphi(y)P(n-m,\cdot,\tau,dy)-\int_{\mathcal{X}}\varphi(y)\mu_{\tau}(dy)\bigg\|\\
=\:&
\bigg\|\int_{\mathcal{X}}\varphi(y)\int_{\mathcal{X}}P(n-m,\cdot,n,dz)P(n,z,\tau,dy)-\int_{\mathcal{X}}\varphi(y)\int_{\mathcal{X}}P(n,z,\tau,dy)\mu_{n}(dz)\bigg\|\\
\leq \:& 
M\alpha^{m}\bigg\|\int_{\mathcal{X}}\varphi(y)(P(n,\cdot,\tau,dy)-\mu_{\tau}(dy))\bigg\|.
\end{align*}
Fix $\varphi$ with $\|\varphi-\int_{\mathcal{X}}\varphi(z)\mu_{\tau}(dz)\|\leq 1$. It holds that
\begin{equation*}
\bigg|\varphi(x)-\int_{\mathcal{X}}\varphi(z)\mu_{\tau}(dz)\bigg| \leq 1+V(x).
\end{equation*}
By Assumption \ref{Assumption-exponential1con}, we have
\begin{equation*}
\bigg|\int_{\mathcal{X}}\varphi(z)P(n,x,\tau,dz)-\int_{\mathcal{X}}\varphi(z)\mu_{\tau}(dz)\bigg| \leq M_{1}(1+V(x)),
\end{equation*}
for any $x\in\mathcal{X}$ and $M_{1}\geq1+C/(1-\gamma)$. Thus for any bounded measurable function $\varphi:\mathcal{X}\to\mathbb{R}$,
\begin{equation*}
\bigg\|\int_{\mathcal{X}}\varphi(z)P(n,\cdot,\tau,dz)-\int_{\mathcal{X}}\varphi(z)\mu_{\tau}(dz)\bigg\|
\leq 
M_{1}\bigg\|\varphi-\int_{\mathcal{X}}\varphi(z)\mu_{\tau}(dz)\bigg\|.
\end{equation*}
Using the same arguments, it holds that
\begin{equation*}
\bigg\|\int_{\mathcal{X}}\varphi(z)(P(\tau-m-1,\cdot,\tau,dz)-\mu_{\tau}(dz))\bigg\|
\leq 
M_{1}\bigg\|\int_{\mathcal{X}}\varphi(z)(P(n-m,\cdot,\tau,dz)-\mu_{\tau}(dz))\bigg\|.
\end{equation*}
Therefore, for any $M_{2}\geq M_{1}^{2}M/\alpha$,
\begin{equation*}
\bigg\|\int_{\mathcal{X}}\varphi(z)P(\tau-m-1,\cdot,\tau,dz)-\int_{\mathcal{X}}\varphi(z)\mu_{\tau}(dz)\bigg\|
\leq 
M_{2}\alpha^{m+1}\bigg\|\varphi-\int_{\mathcal{X}}\varphi(z)\mu_{\tau}(dz)\bigg\|.
\end{equation*}
Consequently, for any $m\in\mathbb{N}$,
\begin{equation*}
\bigg\|\int_{\mathcal{X}}\varphi(z)(P(\tau-m,\cdot,\tau,dz)-\mu_{\tau}(dz))\bigg\|
\leq 
M_{2}\alpha^{m}\bigg\|\varphi-\int_{\mathcal{X}}\varphi(z)\mu_{\tau}(dz)\bigg\|.
\end{equation*}

For any $t\geq 0$, we have $t\in[q,q+1)$ with a unique $q\in\mathbb{N}$ and $t=q+s$ with some $s\in[0,1)$. Therefore, for any bounded function $\varphi:\mathcal{X}\to\mathbb{R}$,
\begin{align*}
&\bigg\|\int_{\mathcal{X}}\varphi(z)P(\tau-t,\cdot,\tau,dz)-\int_{\mathcal{X}}\varphi(z)\mu_{\tau}(dz)\bigg\|\\
=\:&
\bigg\|\int_{\mathcal{X}}\varphi(z)(\int_{\mathcal{X}}P(\tau-t,\cdot,\tau-s,dy)P(\tau-s,y,\tau,dz)-\int_{\mathcal{X}}P(\tau-s,y,\tau,dz)\mu_{\tau-s}(dy))\bigg\|\\
\leq\:& M_{2}\alpha^{q}\bigg\|\int_{\mathcal{X}}\varphi(z)P(\tau-s,\cdot,\tau,dz)-\int_{\mathcal{X}}\varphi(z)\int_{\mathcal{X}}P(\tau-s,y,\tau,dz)\mu_{\tau-s}(dy)\bigg\|\\
\leq\:& 
M_{1}M_{2}\alpha^{q}\bigg\|\varphi-\int_{\mathcal{X}}\varphi(z)\mu_{\tau}(dz)\bigg\|\\
<\:&
\frac{M_{1}M_{2}}{\alpha}\alpha^{t}\bigg\|\varphi-\int_{\mathcal{X}}\varphi(z)\mu_{\tau}(dz)\bigg\|.
\end{align*}
\medskip

\indent
(ii). By item (i), there exist a unique invariant measure family $\{\mu_{t}\}_{t\in\mathbb{R}}$ for $P$, constants $0<\alpha<1$ and $M>0$ such that for some $\tilde{M}>0$ and any $\varphi:\mathcal{X}\to\mathbb{R}$ with $\|\varphi\|_{\infty}\leq 1$,
\begin{align*}
\sup_{x\in\mathcal{X}}\frac{|\int_{\mathcal{X}}\varphi(z)P(\tau-t,x,\tau,dz)-\int_{\mathcal{X}}\varphi(z)\mu_{\tau}(dz)|}{1+V (x)}
\leq\:& 
M\alpha^{t}\sup_{x\in\mathcal{X}}\bigg|\varphi(x)-\int_{\mathcal{X}}\varphi(z)\mu_{\tau}(dz)\bigg|\nonumber\\
\leq\:& 
\tilde{M}\alpha^{t}.
\end{align*}
Therefore, it holds that for any $x\in\mathcal{X}$,
\begin{align*}
\|P(\tau-t,x,\tau,\cdot)-\mu_{\tau}\|_{TV}
=\:&
\sup_{\varphi:\|\varphi\|_{\infty}\leq1}\bigg|\int_{\mathcal{X}}\varphi(z)P(\tau-t,x,\tau,dz)-\int_{\mathcal{X}}\varphi(z)\mu_{\tau}(dz)\bigg|\nonumber\\
\leq\:&
\tilde{M}\alpha^{t}(1+V(x)).
\end{align*}
\end{proof}

\begin{remark}\label{Rem-n}\rm
Under the conditions of Theorem \ref{thmTV}, it holds that
\begin{equation*}
\lim_{t\rightarrow\infty}\bigg|\int_{\mathcal{X}}f(y)P(\tau-t,x,\tau,dy)-\int_{\mathcal{X}}f(y)\mu_{\tau}(dy)\bigg| = 0,
\end{equation*}
for any $\tau\in\mathbb{R}$, $x\in\{x:V(x)<\infty\}$ and $f:\mathcal{X}\rightarrow\mathbb{R}$ with $\|f\|_{\infty}\leq 1$. Therefore, for any $x\in\{x:V(x)<\infty\}$ and $\tau\in\mathbb{R}$,
\begin{equation*}
\lim_{t\rightarrow\infty}P(\tau-t,x,\tau,\cdot) = \mu_{\tau}.
\end{equation*}
\end{remark}

\section{Applications}\label{sec5}

In this section, we illustrate our theoretical results for Markov chains on a countable state space, diffusion processes and storage processes.

\subsection{Markov chains on a countable state space}\label{Sec-Markovchains}

It is worthwhile illustrating our results for Markov chains on a countable state space which include many Markovian models. Specifically, we show the ergodic properties of Markov chains.

Let $\mathcal{X}=\{a_{1},\ldots,a_{l},\ldots\}$ be the countable state space. Consider Markov chain $\{X_{n};n\geq k\}$ for any $k\in\mathbb{Z}$ on $\mathcal{X}$ with the transition probabilities $P(i,x,j,\cdot)$ and transition operators $P^{\ast}_{i,j}$ defined by
\begin{equation*}
P^{\ast}_{i,j}\nu(\cdot) = \sum_{x\in\mathcal{X}}P(i,x,j,\cdot)\nu(x),
\end{equation*}
for any integers $i,j$ such that $j\geq i\geq k$, $x\in\mathcal{X}$ and probability measure $\nu$ on $\mathcal{X}$, where $\nu(x):=\nu(\{x\})$.
A family of probability measures $\{\mu_{n}\}_{n\in\mathbb{Z}}$ on $\mathcal{X}$ is said to be an invariant measure family for $\{X_{n};n\geq k\}$ if for each $k\in\mathbb{Z}$ and $i\geq k$,
\begin{equation*}
P^{\ast}_{k,i}\mu_{k} = \mu_{i}.
\end{equation*}

\begin{definition} \rm\label{def.chain}
We say Markov chain $\{X_{n};n\geq k\}$ for any $k\in\mathbb{Z}$ is \emph{uniformly elliptic} on set $A$, if there exist a Borel probability measure $m$, Borel measurable functions $p_{n}:\mathcal{X}\times\mathcal{X}\rightarrow[0,\infty)$ and a constant $0<\epsilon_{0}<1$ such that for any $n\in \mathbb{Z}$,
\begin{enumerate}
\item $P(n-1,z,n,y)=p_{n-1}(z,y)m(y)$ for any $z,y\in\mathcal{X}$;
\item $\inf_{z\in\mathcal{X}}\sum_{y\in\mathcal{X}}p_{n}(x,y)p_{n+1}(y,z)m(y)\geq\epsilon_{0}$ for any $x\in A$,
\end{enumerate}
where $P(n-1,z,n,y):=P(n-1,z,n,\{y\})$, $m(y):=m(\{y\})$ for any $z,y\in\mathcal{X}$ and $n\in \mathbb{Z}$.
\end{definition}

\begin{remark}\rm
We give an example which satisfies items in Definition \ref{def.chain}.  
Let $\mathcal{X}=\{a_{1},\ldots,a_{N}\}$ be the finite state space, $m^\ast$ be the counting measure and probability measure $m=m^\ast/N$. Denote $NP(n-1,x,n,y)$ by $p_{n-1}(x,y)$ for any $x,y\in\mathcal X$ and $n\in\mathbb Z$. Assume for any $x,y\in\mathcal X$, $P(n,x,n+1,y):=P(n,x,n+1,\{y\})$ is periodic with respect to $n$ with period $T_{x,y}\in\mathbb N$, and $P(i,x,i+1,y)>0$ for $i=0,\ldots,T_{x,y}-1$. 

With these notations, for any $x,y\in\mathcal X$ there exists $T_{x,y}^\ast\in\mathbb N$ such that, $P(n,x,n+2,y)$ is periodic with respect to $n$ with period $T_{x,y}^\ast$. Indeed, we can choose $T_{x,y}^\ast$ equal to
\begin{equation*}
\bigg[[T_{x,a_1},T_{a_1,y}],\ldots,[T_{x,a_N},T_{a_N,y}]\bigg],
\end{equation*}
where $[a,b]$ is the least common multiple of $a,b$.
\medskip

\noindent
(i). It holds that for any $z,y\in\mathcal X$ and $n\in\mathbb Z$,
\begin{align*}
P(n-1,z,n,y)&=P(n-1,z,n,y)N\frac{m^\ast(y)}{N} \\
&=p_{n-1}(z,y)m(y),
\end{align*}
where $m^\ast(y):=m^\ast(\{y\})$ and $m(y):=m(\{y\})$. 
\medskip

\noindent
(ii). For any $n\in\mathbb Z$ and $x,y\in\mathcal X$, there exist $k\in\mathbb Z$ and $p\in\{0,1,\ldots,T^\ast_{x,y}-1\}$ such that $n=kT^\ast_{x,y}+p$. By assumptions, $P(n+mT^\ast_{x,y},x,n+2+mT^\ast_{x,y},y)=P(n,x,n+2,y)$ for $m\in\mathbb Z$. Therefore,
\begin{equation*}
\min_{n\in\mathbb Z}P(n,x,n+2,y)=\min_{n\in\{0,\ldots,T_{x,y}^\ast-1\}}P(n,x,n+2,y).
\end{equation*}
It holds that for $A\subset\mathcal X$ and $x\in A$,
\begin{align*}
&\inf_{z\in\mathcal X}\sum_{y\in\mathcal X}p_n(x,y)p_{n+1}(y,z)m(y) \\
=&\inf_{z\in\mathcal X}\sum_{y\in\mathcal X}N^2P(n,x,n+1,y)P(n+1,y,n+2,z)\frac{1}{N} \\
=&\inf_{z\in\mathcal X}NP(n,x,n+2,z) \\
\geq&\min_{z\in\mathcal X}N\min_{n\in\{0,\ldots,T_{x,z}^\ast-1\}}P(n,x,n+2,z) \\
>&0.
\end{align*}
Take $\epsilon_0:=\min_{z\in\mathcal X}N\min_{n\in\{0,\ldots,T_{x,z}^\ast-1\}}P(n,x,n+2,z)$. We obtain (ii) in Definition \ref{def.chain}.
\end{remark}

We will establish the existence, uniqueness and exponential ergodicity with the norm $\|\cdot\|_{TV}$ of the invariant measure family for $\{X_{n};n\geq k\}$ in the following theorem.

\begin{theorem}\label{thmchain}
Suppose there exist a Borel measurable function $V:\mathcal{X}\to[0,\infty)$, constants $0<\gamma<1$ and $C>0$ such that for any $x\in \mathcal{X}$ and $n\in\mathbb{Z}$,
\begin{align}\nonumber
\sum_{y\in\mathcal{X}}V(y)P(n-1,x,n,y) \leq \gamma V(x)+C,
\end{align}
and for any $R>0$, Markov chain $\{X_{n};n\geq k\}$ for any $k\in\mathbb{Z}$ is uniformly elliptic on set $\{x:V(x)\leq R\}$. Then there exist a unique invariant measure family $\{\mu_{n}\}_{n\in\mathbb{Z}}$ for $\{X_{n};n\geq k\}$ satisfying $\int_{\mathcal X}V(x)\mu_n(dx)<+\infty$ for any $n\in\mathbb Z$, constants $0<\alpha<1$ and $\tilde{M}>0$ such that
\begin{equation*}
\|P(n-m,x,n,\cdot)-\mu_{n}\|_{TV}
\leq
\tilde{M}\alpha^{m}(1+V(x)),
\end{equation*}
for any $n\in\mathbb{Z}$, $m\in\mathbb{N}$ and $x\in\mathcal{X}$.
\end{theorem}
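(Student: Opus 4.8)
The plan is to derive Theorem \ref{thmchain} from the general discrete-time results of this section, namely Theorems \ref{thm-uniquet} and \ref{thmTVdiscrete}, by verifying that Assumptions \ref{Assumption-exponential1} and \ref{Assumption-exponential2} hold for the given function $V$.

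Checking Assumption \ref{Assumption-exponential1} is immediate: since $V$ takes values in $[0,\infty)$, the set $\{x:V(x)<\infty\}=\mathcal X$ is nonempty, so item (i) holds, and the hypothesised drift inequality $\sum_{y\in\mathcal X}V(y)P(n-1,x,n,y)\le\gamma V(x)+C$ is exactly item (ii) written for a countable state space, with the same $\gamma$ and $C$.

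The main work is Assumption \ref{Assumption-exponential2}, which I would obtain through the minorization criterion in Remark \ref{Ref-bound}(ii), taking $n_{0}=2$. Fix $R>0$ and apply uniform ellipticity of $\{X_{n};n\ge k\}$ on $A:=\{x:V(x)\le R\}$ to get a probability measure $m$, densities $p_{n}$, and $\epsilon_{0}\in(0,1)$ as in Definition \ref{def.chain}. For $x\in A$ and any $z\in\mathcal X$, the Chapman--Kolmogorov identity together with item (i) of Definition \ref{def.chain} gives
\begin{equation*}
P(n-2,x,n,z)=\sum_{y\in\mathcal X}P(n-2,x,n-1,y)\,P(n-1,y,n,z)=m(z)\sum_{y\in\mathcal X}p_{n-2}(x,y)\,p_{n-1}(y,z)\,m(y),
\end{equation*}
and item (ii) of Definition \ref{def.chain}, applied with $n$ replaced by $n-2$ (legitimate since that item holds for all integers), bounds the last sum below by $\epsilon_{0}$. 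Hence $P(n-2,x,n,\cdot)\ge\epsilon_{0}\,m$ for every $n\in\mathbb Z$ and every $x\in A$. Because $(x,y)\in\mathcal{C}_{R}$ forces both $V(x)\le R$ and $V(y)\le R$, this is precisely the minorization condition of Remark \ref{Ref-bound}(ii) with $\nu=m$, $\alpha=\epsilon_{0}$ and $n_{0}=2$, so Assumption \ref{Assumption-exponential2} holds with $n_{0}(R)=2$ and $\delta(R)=\epsilon_{0}(R)\in(0,1)$.

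With both assumptions in hand, Theorem \ref{thm-uniquet} yields a unique sequence $\{\mu_{n}\}_{n\in\mathbb Z}$ with $\int_{\mathcal X}V\,d\mu_{n}<\infty$ satisfying \eqref{eq.discreuni}; testing \eqref{eq.discreuni} against the indicators $f=1_{\{z\}}$, which satisfy $\|f\|\le 1$ because $V\ge 0$, would show $P^{\ast}_{m,n}\mu_{m}(\{z\})=\mu_{n}(\{z\})$ for every $z\in\mathcal X$ and $m\le n$, i.e.\ $\{\mu_{n}\}_{n\in\mathbb Z}$ is an invariant measure family for $\{X_{n};n\ge k\}$ in the sense of Subsection \ref{Sec-Markovchains}. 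The exponential bound $\|P(n-m,x,n,\cdot)-\mu_{n}\|_{TV}\le\tilde M\alpha^{m}(1+V(x))$ is then exactly the conclusion of Theorem \ref{thmTVdiscrete}(ii). I expect the only point requiring any care to be the Chapman--Kolmogorov computation and the index bookkeeping in Definition \ref{def.chain}; the rest is a direct appeal to the theorems already established.
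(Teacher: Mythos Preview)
Your proposal is correct and follows essentially the same route as the paper: both verify Assumption \ref{Assumption-exponential2} via the two-step minorization $P(n-2,x,n,\cdot)\ge\epsilon_{0}m$ obtained from the uniform ellipticity condition (the paper writes out the resulting total variation bound explicitly, whereas you invoke Remark \ref{Ref-bound}(ii), which amounts to the same thing), and then appeal to Theorems \ref{thm-uniquet} and \ref{thmTVdiscrete}. Your additional remark that testing \eqref{eq.discreuni} against indicators recovers the invariant-measure-family property in the sense of Subsection \ref{Sec-Markovchains} is a helpful clarification the paper leaves implicit.
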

\begin{proof}
For any $A\subset\mathcal{X}$, $n\in\mathbb{Z}$, $R>0$, and $x\in\{x:V(x)\leq R\}$,
\begin{align*}
P(n-2,x,n,A)=
\:&
\sum_{y\in\mathcal{X}}P(n-2,x,n-1,y)\sum_{z\in A}p_{n-1}(y,z)m(z) \\
=\:&
\sum_{z\in A}\sum_{y\in\mathcal{X}}p_{n-2}(x,y)p_{n-1}(y,z)m(y)m(z) \\
\geq\:&
\epsilon_{0}m(A).
\end{align*}
Therefore, for any $a_{i}\in\{x:V(x)\leq R\}$ there exists a measure $\mu_{a_{i}}$ such that
\begin{equation*}
P(n-2,a_{i},n,A) = \epsilon_{0}m(A)+(1-\epsilon_{0})\mu_{a_{i}}(A).
\end{equation*}
Then we have for any $a_{i},a_{j}\in\{x:V(x)\leq R\}$,
\begin{equation*}
\|P(n-2,a_{i},n,\cdot)-P(n-2,a_{j},n,\cdot)\|_{TV}=(1-\epsilon_{0})\|\mu_{a_{i}}-\mu_{a_{j}}\|_{TV}\leq 2(1-\epsilon_{0}).
\end{equation*}
Thus Assumption \ref{Assumption-exponential2} is satisfied. The proof is concluded by Theorems \ref{thm-uniquet} and \ref{thmTVdiscrete}.
\end{proof}

\begin{remark}\rm
Dolgopyat and Sarig \cite{DO} studied local limit theorems for inhomogeneous Markov chains with uniformly elliptic property on the whole state space $\mathcal{X}$ and bounded densities $p_{n}$ for $n\geq1$. Given a nonnegative function $V$, the uniformly elliptic assumption in \cite{DO} is relaxed to the level sets of $V$ when establishing the ergodicity of Markov chains for $n\geq k$ in Theorem \ref{thmchain}. In the example provided in \cite{DY}, the densities $p_{n}$ for $n\geq1$, satisfying the uniformly elliptic condition on $\mathcal{X}$, vanish on a large set. Extending $n$ to be in $\mathbb{Z}$ and taking $V(x)=x^{2}$, this specific Markov chain has a unique invariant measure family with convergence in total variation distance by Theorem \ref{thmchain}.
\end{remark}

\subsection{Diffusion processes on $\mathbb{R}^{n}$}\label{sec4.2}

The focus of this subsection is on the diffusion process $\{X(t);t\geq s\}$ for any $s\in\mathbb{R}$, the special case of Markov processes which exists globally on $\mathbb{R}^{n}$ and has almost surely continuous sample paths. For any $t\geq\tau\geq s$, denote by $P(\tau,x,t,\cdot)$ the transition probabilities of $\{X(t);t\geq s\}$, by $P^{\ast}_{\tau,t}$ the transition operators of $\{X(t);t\geq s\}$ given in \eqref{Def-transition} and by $P_{\tau,t}$ the transition semigroups of $\{X(t);t\geq s\}$ given in \eqref{Def-semigroup}.

Suppose that $X(\cdot)$ satisfies the stochastic differential equation
\begin{equation}\label{SDE}
d X(t) =  f(t, X(t)) d t + g(t, X(t))d B(t),
\end{equation}
where both $f:\mathbb{R}\times\mathbb{R}^{n}\to\mathbb{R}^{n}$ and $g:\mathbb{R}\times\mathbb{R}^{n}\to\mathcal{M}^{n\times n}$ are continuous, $f$ is the drift vector, the diffusion matrix $g$ is symmetric and nonnegative definite, and $B$ is an $n$-dimensional Wiener process. Let $\mathcal{M}^{n\times n}$ denote the set of all $n\times n$-matrices, equipped with the norm of $\mathbb{R}^{nn}$.
For any $(t,x)\in\mathbb{R}\times\mathbb{R}^{n}$, denote $|f(t,x)|_{2}=(\sum_{i=1}^{n}(f_{i}(t,x))^{2})^{1/2}$ and $\|g(t,x)\|_{2}=(\sum_{i,j=1}^{n}(g_{ij}(t,x))^{2})^{1/2}$.

In the following, we show the existence, uniqueness and exponential ergodicity of the invariant measure family given in Section \ref{sec2} for \eqref{SDE}. Recall the definition and properties of uniformly almost periodic functions given in Section \ref{sec2}. 

We say function $F:\mathbb R\times\mathbb R^n\to\mathbb R$ belongs to the domain $\mathcal D(\mathcal L)$ of the generator $\mathcal L$ of $X(t)$ given in \eqref{SDE} if the following limit exists (see Definition 5.3 in \cite{Sarkka2019Solin}):
\begin{equation*}
(\mathcal L F)(X(t),t)=\lim_{s\downarrow 0}\frac{E[F(X(t+s),t+s)|X(t)]-F(X(t),t)}{s}.
\end{equation*}
If $F(t,x)$ is twice continuously differentiable with respect to $x$ and continuously differentiable with respect to $t$ then $F\in\mathcal D(\mathcal L)$ (see arguments before $(3.35)$ in \cite{Kh}).
The generator $\mathcal L$ associated with \eqref{SDE} is then given as (see Definition 5.3 in \cite{Sarkka2019Solin}) 
\begin{equation*}
\mathcal{L}
=\frac{\partial}{\partial t}+\sum_{i=1}^{n}\sum_{j=1}^{n}a_{ij}(t,x)\frac{\partial^{2}}{\partial x_{i}\partial x_{j}}+\sum_{i=1}^{n}f_{i}(t,x)\frac{\partial}{\partial x_{i}},
\end{equation*}
where $a_{ij}=(1/2g\cdot g^{\top})_{ij}$.
Let $(\mathcal{X}_{1},d_{1})$ and $(\mathcal{X}_{2},d_{2})$ be Polish spaces. Denote by $C^{0,k}(\mathbb{R}\times\mathcal{X}_{1},\mathcal{X}_{2})$ the space of all continuous functions $f:\mathbb{R}\times\mathcal{X}_{1}\rightarrow\mathcal{X}_{2}$ which are $k$ times continuously differentiable on $\mathcal{X}_{1}$ for $k\in\mathbb{N}^{+}$.

\begin{definition} \rm\label{Def-Lyaeqcon}
A twice continuously differentiable function $V:\mathcal{X}\to[0,\infty)$ is called a Lyapunov function for \eqref{SDE} if $V$ satisfies the following conditions:
\begin{enumerate}
\item for any $R\geq 0$, $\{x:V(x)\leq R\}$ is compact;
\item there exists $c>0$ such that for any $x\in \mathcal{X}$ and $t\in\mathbb{R}$,
      \begin{equation*}
      (\mathcal{L}V)(t,x) \leq -cV(x).
      \end{equation*}
\end{enumerate}
\end{definition}
It holds that the function $V$ in Definition \ref{Def-Lyaeqcon} is in $\mathcal D(\mathcal L)$.

\begin{theorem}\label{Th-nondegenerate}
Let $f$ and $g$ be of classes $C^{0,1}(\mathbb{R}\times\mathbb{R}^{n},\mathbb{R}^{n})$ and $C^{0,2}(\mathbb{R}\times\mathbb{R}^{n},\mathcal{M}^{n\times n})$, respectively. Suppose that the following conditions hold:
\begin{enumerate}
\item there exists a Lyapunov function $V(x)$ for \eqref{SDE};
\item $f(t,x)$ and $g(t,x)$ are almost periodic in $t$ uniformly for $x\in\mathbb{R}^{n}$;
\item there exists $\lambda>0$ such that for any $\xi\in\mathbb{R}^{n}$, $\sum_{i,j=1}^{n}(gg^{\top})_{ij}\xi_{i}\xi_{j}\geq\lambda|\xi|^{2}$.
\end{enumerate}
Then there exists a unique invariant measure family $\{\mu_{t}\}_{t\in\mathbb{R}}$ for \eqref{SDE} satisfying $\int_{\mathcal X}V(x)\mu_t(dx)<+\infty$ for any $t\in\mathbb R$. Furthermore, there exist constants $0<\alpha<1$ and $\tilde{M}>0$ such that
\begin{equation*}
\|P(\tau-t,x,\tau,\cdot)-\mu_{\tau}\|_{TV}
\leq
\tilde{M}\alpha^{t}(1+V(x)),
\end{equation*}
for any $\tau\in\mathbb{R}$, $t\geq 0$ and $x\in\mathbb{R}^{n}$.
\end{theorem}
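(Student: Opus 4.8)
\emph{Plan.} The whole theorem follows once its hypotheses are shown to imply Assumptions \ref{Assumption-exponential1con} and \ref{Assumption-exponential2con}: then Theorem \ref{thm-uniquecontinuous} produces the unique invariant measure family $\{\mu_t\}_{t\in\mathbb R}$ with $\int_{\mathbb R^n}V\,d\mu_t<\infty$ for all $t$, and item (ii) of Theorem \ref{thmTV} is precisely the asserted estimate $\|P(\tau-t,x,\tau,\cdot)-\mu_\tau\|_{TV}\le\tilde M\alpha^t(1+V(x))$. No further ingredient is needed; in particular the Feller hypothesis used in the existence theorems of Subsection \ref{sec4} is not required here.

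\emph{Verifying Assumption \ref{Assumption-exponential1con}.} Since $V$ depends only on $x$, condition (ii) of Definition \ref{Def-Lyaeqcon} reads $\sum_{i,j}a_{ij}(t,x)\partial_{ij}V(x)+\sum_i f_i(t,x)\partial_i V(x)\le -cV(x)$ for all $(t,x)$. Applying It\^o's formula to $V(X(\cdot))$ along \eqref{SDE}, stopping at the exit times of the compact sets $\{V\le N\}$ (which increase to $+\infty$ by global existence together with continuity of $r\mapsto V(X(r))$, passing to the limit with $V\ge 0$), and then invoking Gronwall's inequality, one obtains
\begin{equation*}
\int_{\mathbb R^n}V(y)\,P(s-t,x,s,dy)=E_x[V(X(s))]\le e^{-ct}V(x),\qquad s\in\mathbb R,\ t\ge0,\ x\in\mathbb R^n.
\end{equation*}
Hence Assumption \ref{Assumption-exponential1con} holds with $\gamma(t)=e^{-ct}\in(0,1)$ and $C=1$; moreover $\{x:V(x)<\infty\}=\mathbb R^n$.

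\emph{Verifying Assumption \ref{Assumption-exponential2con}.} Fix $R>0$; the set $K:=\{V\le R\}$ is compact by Definition \ref{Def-Lyaeqcon}(i). It suffices to produce $t_0>0$, a ball $B'\subset\mathbb R^n$ and $\alpha_0>0$, all independent of $t$, such that $P(t-t_0,x,t,\cdot)\ge\alpha_0\,\mathrm{Leb}|_{B'}$ for every $t\in\mathbb R$ and $x\in K$, since this immediately yields $\sup_{(x,y)\in\mathcal C_R}\|P(t-t_0,x,t,\cdot)-P(t-t_0,y,t,\cdot)\|_{TV}\le 2\big(1-\alpha_0\,\mathrm{Leb}(B')\big)$, i.e.\ \eqref{eqassumption318} with $\delta=\alpha_0\,\mathrm{Leb}(B')$. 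The reduction of the uniformity in $t$ to a compactness statement rests on the time–shift identity $P(t-t_0,x,t,\cdot)=P^{f_t,g_t}(-t_0,x,0,\cdot)$, where $(f_t,g_t):=\big(f(\cdot+t,\cdot),g(\cdot+t,\cdot)\big)$ are the time translates and $P^{h_1,h_2}$ denotes the transition function of the SDE with coefficients $(h_1,h_2)$. By hypothesis (ii) and Proposition \ref{prop-almostperiodic}, the closure $\mathcal H$ of $\{(f_t,g_t):t\in\mathbb R\}$ under uniform convergence on compact subsets of $\mathbb R\times\mathbb R^n$ is compact; every element of $\mathcal H$ is continuous, is bounded on each product $\mathbb R\times(\text{compact set})$ by a constant independent of the element (almost periodic functions are bounded on such sets), and, by hypothesis (iii), is uniformly elliptic with the same constant $\lambda$. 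For each $(h_1,h_2)\in\mathcal H$ the associated uniformly elliptic diffusion possesses a jointly continuous, strictly positive transition density $p^{h_1,h_2}(-t_0,x,0,z)$ — the unboundedness of $h_1,h_2$ at spatial infinity being handled by truncating the coefficients outside a large ball $B\supset K$ and controlling the exit probability from $B$ over $[0,t_0]$ uniformly, using the $L^\infty$-bounds just mentioned (and, if convenient, Assumption \ref{Assumption-exponential1con}) — and $(h_1,h_2,x,z)\mapsto p^{h_1,h_2}(-t_0,x,0,z)$ depends continuously on $(h_1,h_2)$ in the topology of $\mathcal H$. Minimizing this continuous, strictly positive function over the compact set $\mathcal H\times K\times B'$ yields a constant $2\alpha_0>0$; since $(f_t,g_t)\in\mathcal H$ for every $t\in\mathbb R$, this gives $p(t-t_0,x,t,z)\ge 2\alpha_0$ for all $t\in\mathbb R$, $x\in K$, $z\in B'$, hence the required minorization. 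With Assumptions \ref{Assumption-exponential1con} and \ref{Assumption-exponential2con} in hand, Theorems \ref{thm-uniquecontinuous} and \ref{thmTV} close the proof.

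\emph{Main difficulty.} The step I expect to require genuine care is the uniform minorization above, where two issues intertwine: first, setting up a transition density carrying a quantitative lower bound for operators whose coefficients are only locally bounded, which forces a localization argument (truncation outside a large ball plus a uniform exit estimate); second, ensuring that the resulting Nash--Aronson / Krylov--Safonov type constants — together with the ellipticity and $L^\infty$ data they depend on — do not degenerate as the time origin ranges over all of $\mathbb R$. The latter is exactly what almost periodicity buys: via Proposition \ref{prop-almostperiodic} it makes the hull $\mathcal H$ of the coefficient field compact, so that an infimum of a continuous positive quantity over $\mathcal H$ is strictly positive and serves uniformly for every time origin.
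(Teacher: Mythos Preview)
Your overall plan is correct and matches the paper's: both proofs reduce the theorem to verifying Assumptions \ref{Assumption-exponential1con} and \ref{Assumption-exponential2con} and then invoke Theorems \ref{thm-uniquecontinuous} and \ref{thmTV}. Your verification of Assumption \ref{Assumption-exponential1con} via It\^o/Gronwall is essentially the same as the paper's Dynkin-formula argument applied to $u(t,x)=e^{ct}V(x)$.

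Where you diverge is in the minorization for Assumption \ref{Assumption-exponential2con}. The paper first proves that the transition density $p(s-t,x,s,y)$ is itself almost periodic in $s$ uniformly in $x$ (using a convergence theorem for SDEs with converging coefficients, namely Theorem~3 in \cite{GS}), then exploits this to bound $p(t-9R_2^2,x,t,y)$ from below by an infimum over a \emph{bounded} time interval minus $\epsilon$, and finally rules out degeneration of that infimum via a contradiction based on the parabolic Harnack inequality combined with the Lyapunov/Chebyshev estimate. Your route instead compactifies the family of time-shifted coefficients into a hull $\mathcal H$ (legitimate by Proposition \ref{prop-almostperiodic}), asserts joint continuity and strict positivity of $(h_1,h_2,x,z)\mapsto p^{h_1,h_2}(-t_0,x,0,z)$, and extracts the uniform lower bound as the minimum of a positive continuous function on a compact set. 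Both arguments ultimately need a convergence/continuity statement for transition kernels under locally uniform convergence of coefficients; the paper obtains this at the level of measures and then pushes positivity through Harnack, whereas you ask for it directly at the density level. Your approach is cleaner conceptually and avoids the somewhat delicate Harnack-plus-Chebyshev contradiction, but the price is exactly what you flag as the main difficulty: the joint continuity of the density map on $\mathcal H\times K\times B'$ for merely locally bounded coefficients is not immediate and must be produced by a careful localization (uniform Aronson/Nash bounds for the truncated operators plus a uniform-in-$\mathcal H$ exit estimate). Provided you supply that step, your argument is a valid alternative.
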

\begin{proof}
Denote by $B(0,n)$ the ball of radius $n>0$ centered at $0$. It holds that for every $n\in\mathbb{N}$, there exists $K_{n}<\infty$ such that for all $s\in \mathbb{R}$ and $x,y\in B(0,n)$,
\begin{equation*}
\|g(s,x)\|_{2}+|f(s,x)|_{2} \leq K_{n}(1+|x|_{2}),
\end{equation*}
and
\begin{equation*}
\|g(s,x)-g(s,y)\|_{2}+|f(s,x)-f(s,y)|_{2} \leq K_{n}|x-y|_{2}.
\end{equation*}
Then by Theorem 3.4 in \cite{Kh}, there exists a unique solution $X(\cdot)$ of \eqref{SDE}.
For any $t\in\mathbb{R}$ and $x\in\mathbb{R}^{n}$, take $u(t,x)=V(x)e^{ct}$. It holds that
\begin{align*}
\mathcal{L}u(t,x)
=\:&
cV(x)e^{ct}+\sum_{i=1}^{n}\sum_{j=1}^{n}a_{ij}(t,x)\frac{\partial^{2}V(x)}{\partial x_{i}\partial x_{j}}e^{ct}+\sum_{i=1}^{n}f_{i}(t,x)\frac{\partial V(x)}{\partial x_{i}}e^{ct}\\
=\:&
e^{ct}(cV(x)+(\mathcal{L}V)(t,x))\\
\leq\:& 
0.
\end{align*}
For any $R>0$, denote by $\tau_{R}$ the time at which the sample function of $X$ first leaves the compact set $\{x:V(x)\leq R\}$ and let $\tau_{R}(t)=\min\{\tau_{R},t\}$. By Dynkin's formula, for any $s\leq\tau_{R}(t)$ and $X(s)=x\in\{x:V(x)\leq R\}$ we have
\begin{equation*}
E[u(X(\tau_{R}(t)),\tau_{R}(t))] 
= u(x,s)+E\int_{s}^{\tau_{R}(t)}\mathcal{L}u(X(\tau),\tau)d\tau
\leq u(x,s),
\end{equation*}
where $E$ is the expectation with respect to the law of $X$ given $X(s)=x$.
Letting $R\rightarrow\infty$, we have $\tau_{R}(t)\rightarrow t$. It holds that
\begin{align*}
e^{ct}\int_{\mathbb{R}^{n}}V(y)P(s,x,t,dy)=\:&E[u(X(t),t)]\\
\leq\:&
\liminf_{R\rightarrow\infty}E[u(X(\tau_{R}(t)),\tau_{R}(t))]\\
\leq\:& 
V(x)e^{cs}.
\end{align*}
Thus we have for any $x\in\mathbb{R}^{n}$, $s\in\mathbb{R}$ and $t>0$,
\begin{equation*}
\int_{\mathbb{R}^{n}}V(y)P(s-t,x,s,dy) \leq e^{-ct}V(x).
\end{equation*}
Then Assumption \ref{Assumption-exponential1con} is satisfied with $\gamma(t)=e^{-ct}$ and any $C>0$.

It is well known that $P(s,x,t,A)>0$ for any $s\leq t$, $x\in\mathbb{R}^{n}$ and nonempty open set $A\subset\mathbb{R}^{n}$.
By Remark 3.9 in \cite{Kh}, for any $s<t$, $x\in\mathbb{R}^{n}$ and $\Gamma\subset\mathbb{R}^{n}$ we have
\begin{equation*}
P(s,x,t,\Gamma) = \int_{\Gamma}p(s,x,t,y)dy,
\end{equation*}
where $p(s,x,t,y)$ is the unique positive and continuous density.

For every pair of sequences $\{\alpha'_{n}\},\{\beta'_{n}\}\subset\mathbb{R}$, there exist common subsequences $\{\alpha_{n}\}$ and $\{\beta_{n}\}$ such that for any compact set $K\subset\mathcal\mathbb{R}^{n}$, $T_{\alpha+\beta}f$, $T_{\alpha}T_{\beta}f$, $T_{\alpha+\beta}g$ and $T_{\alpha}T_{\beta}g$ exist, and $T_{\alpha+\beta}f=T_{\alpha}T_{\beta}f$, $T_{\alpha+\beta}g=T_{\alpha}T_{\beta}g$ uniformly on $\mathbb{R}\times K$.
By Theorem 3.4 in \cite{Kh}, equations with coefficients $T_{\alpha}f$, $T_{\alpha}g$ and $T_{\alpha}T_{\beta}f$, $T_{\alpha}T_{\beta}g$, respectively, have unique solutions. For any $s\in\mathbb{R}$, $t\in\mathbb{R}^{+}$ and $x\in\mathbb{R}^{n}$, denote transition probability functions of the unique solutions by $P_{X}(s-t,x,s,\cdot)$ and $P_{Y}(s-t,x,s,\cdot)$, respectively. Then by Theorem 3 in \cite{GS}, we have for any compact set $K\subset\mathbb{R}^{n}$, $y\in\mathbb{R}^{n}$ and $t\in\mathbb{R}^{+}$,
\begin{equation*}
\lim_{n\rightarrow\infty}P(s-t+\alpha_{n},x,s+\alpha_{n},dy)-P_{Y}(s-t,x,s,dy) = 0,
\end{equation*}
and
\begin{equation*}
\lim_{n\rightarrow\infty}P_{Y}(s-t+\beta_{n},x,s+\beta_{n},dy)-P_{Z}(s-t,x,s,dy) = 0,
\end{equation*}
uniformly on $\mathbb{R}\times K$.
Using the same arguments as above we have the equation with coefficients $T_{\alpha+\beta}f$ and $T_{\alpha+\beta}g$ has a unique solution. Denote the transition probability function of the unique solution by $P_{Z_{1}}(s-t,x,s,\cdot)$. Thus we have
\begin{equation*}
\lim_{n\rightarrow\infty}P(s-t+\alpha_{n}+\beta_{n},x,s+\alpha_{n}+\beta_{n},dy)-P_{Z_{1}}(s-t,x,s,dy) = 0,
\end{equation*}
uniformly on $\mathbb{R}\times K$.
Since the solution of the equation with coefficients $T_{\alpha+\beta}f$ and $T_{\alpha+\beta}g$ is unique, it holds that
\begin{equation*}
\lim_{m\rightarrow\infty}\lim_{n\rightarrow\infty}P(s-t+\alpha_{n}+\beta_{m},x,s+\alpha_{n}+\beta_{m},dy)
= 
\lim_{n\rightarrow\infty}P(s-t+\alpha_{n}+\beta_{n},x,s+\alpha_{n}+\beta_{n},dy),
\end{equation*}
uniformly on $\mathbb{R}\times K$.
Thus for any $y\in\mathbb{R}^{n}$ and $t\in\mathbb{R}^{+}$, $P(s-t,x,s,dy)$ is almost periodic in $s$ uniformly for $x\in\mathbb{R}^{n}$ by Proposition \ref{prop-almostperiodic}. Therefore, $p(s-t,x,s,y)$ is almost periodic in $s$ uniformly for $x\in\mathbb{R}^{n}$.

For any $R>0$, take
\begin{equation}\label{eqR1}
R_{1} \geq \max\{R, 2C/(1-\gamma(R^{2}))\}.
\end{equation}
Fixing $y^{\ast}\in\mathbb{R}^{n}$ and take $R_{2}>R_{1}$ such that $O(R_{2})\supset\{x:V(x)\leq R_{1}\}$, where $O(R_{2})$ is the open cube of edge length $R_{2}$ centered at $y^{\ast}$. Notice that for any $y\in\mathbb{R}^{n}$, $p(t-9R_{2}^{2},x,t,y)$ are almost periodic in $t$ uniformly for $x\in\mathbb{R}^{n}$. For each $\epsilon>0$ there exist an $l\in\mathbb{R}^{+}$ and $s\in[-t+9R_{2}^{2},-t+9R_{2}^{2}+l]$ satisfying $|p(t+s-9R_{2}^{2},x,t+s,y)-p(t-9R_{2}^{2},x,t,y)|<\epsilon$ for any $t\in\mathbb{R}$, $x\in\{x:V(x)\leq R\}$ and $y\in O(R_{2})$. Thus we have
\begin{equation*}
p(t-9R_{2}^{2},x,t,y)
> p(t+s-9R_{2}^{2},x,t+s,y)-\epsilon
\geq \inf_{9R_{2}^{2}\leq q\leq 9R_{2}^{2}+ l}p(q-9R_{2}^{2},x,q,y)-\epsilon.
\end{equation*}
Therefore, for the Lebesgue measure $\lambda$, any $t\in\mathbb{R}$, $x\in\{x:V(x)\leq R\}$ and $A\subset\mathbb{R}^{n}$,
\begin{align*}
&P(t-9R_{2}^{2},x,t,A) \\
\geq\:&
P(t-9R_{2}^{2},x,t,A\cap O(R_{2})) \\
\geq\:&
\inf_{x\in\{x:V(x)\leq R\}}\inf_{y\in O(R_{2})}p(t-9R_{2}^{2},x,t,y)\lambda(A\cap O(R_{2})) \\
\geq\:&
(\inf_{x\in\{x:V(x)\leq R\}}\inf_{y\in O(R_{2})}\inf_{9R_{2}^{2}\leq q\leq 9R_{2}^{2}+ l}p(q-9R_{2}^{2},x,q,y)-\epsilon)\lambda(A\cap O(R_{2})).
\end{align*}

Notice that $l$ depends on the choice of $\epsilon$. Let us show that for some small $\epsilon>0$, $\inf_{x\in\{x:V(x)\leq R\}}\inf_{y\in O(R_{2})}\inf_{9R_{2}^{2}\leq q\leq 9R_{2}^{2}+ l}p(q-9R_{2}^{2},x,q,y)-\epsilon>0$.
Assume
$0<\inf_{x\in\{x:V(x)\leq R\}}\inf_{y\in O(R_{2})}\inf_{9R_{2}^{2}\leq q\leq 9R_{2}^{2}+ l}p(q-9R_{2}^{2},x,q,y)\leq\epsilon$. It holds that for some $9R_{2}^{2}\leq q^{\ast}\leq 9R_{2}^{2}+ l$,
\begin{equation*}
\inf_{x\in\{x:V(x)\leq R\}}\inf_{y\in O(R_{2})}p(q^{\ast}-9R_{2}^{2},x,q^{\ast},y) \leq \epsilon.
\end{equation*}
Denote $Q(R_{2})=O(R_{2})\times(q^{\ast}-R_{2}^{2},q^{\ast})$. We have
\begin{align*}
&\inf_{x\in\{x:V(x)\leq R\}}\inf_{(y,t)\in Q(R_{2})}p(q^{\ast}-9R_{2}^{2},x,t,y) \\
\leq\:&
\inf_{x\in\{x:V(x)\leq R\}}\inf_{y\in O(R_{2})}p(q^{\ast}-9R_{2}^{2},x,q^{\ast},y) \\
\leq\:&
\epsilon.
\end{align*}
According to the continuity of $f$, $g$ and $\partial g/\partial x_{i}$ and the parabolic Harnack inequality stated as Theorem 6.2.7 (or the analogous result in Theorem 8.1.1) in \cite{BKRS}, there exist positive constants $C_{0}$ and $q_{1}\in[q^{\ast}-8R_{2}^{2},q^{\ast}-7R_{2}^{2}]$ such that
\begin{equation*}
\sup_{y\in O(R_{2})}\inf_{x\in\{x:V(x)\leq R\}}p(q^{\ast}-9R_{2}^{2},x,q_{1},y) \leq C_{0}\epsilon.
\end{equation*}
We can choose $\epsilon$ small enough so that $C_{0}\epsilon< (1-\gamma(R^{2}))/2$. It holds that
\begin{align*}
&\inf_{x\in\{x:V(x)\leq R\}}P(q^{\ast}-9R_{2}^{2},x,q_{1},\{x:V(x)\leq R_{1}\}) \\
\leq\:&
\inf_{x\in\{x:V(x)\leq R\}}P(q^{\ast}-9R_{2}^{2},x,q_{1},O(R_{2})) \\
<\:&
\frac{1-\gamma(R^{2})}{2}.
\end{align*}
Notice that we have by Chebyshev's inequality and \eqref{eqR1},
\begin{equation*}
\inf_{x\in\{x:V(x)\leq R\}}P(q^{\ast}-9R_{2}^{2},x,q_{1},\{x:V(x)\leq R_{1}\})
\geq 
1-\frac{\gamma(R_{2}^{2}) R+C}{R_{1}}
\geq 
\frac{1-\gamma(R^{2})}{2},
\end{equation*}
which is a contradiction. Reasoning as above we have
\begin{equation*}
a := \inf_{x\in\{x:V(x)\leq R\}}\inf_{y\in O(R_{2})}p(q^{\ast}-9R_{2}^{2},x,q^{\ast},y) > \epsilon.
\end{equation*}
It holds that
\begin{equation*}
P(t-9R_{2}^{2},x,t,A) \geq \delta\nu(A),
\end{equation*}
with $\delta=(a-\epsilon)\min\{\lambda(O(R_{2})),1\}$ and $\nu(A)=\lambda(A\cap O(R_{2}))/\lambda(O(R_{2}))$. Therefore, for any $x\in\{x:V(x)\leq R\}$ there exists a measure $\mu_{x}$ such that for any $t\in\mathbb{R}$ and $A\subset\mathbb{R}^{n}$,
\begin{equation*}
P(t-9R_{2}^{2},x,t,A) = \delta\nu(A)+(1-\delta)\mu_{x}(A).
\end{equation*}
Then we have for any $x,y\in\{x:V(x)\leq R\}$,
\begin{equation*}
\|P(t-9R_{2}^{2},x,t,\cdot)-P(t-9R_{2}^{2},y,t,\cdot)\|_{TV}
=
(1-\delta)\|\mu_{x}-\mu_{y}\|_{TV}
\leq 
2(1-\delta).
\end{equation*}
Thus Assumption \ref{Assumption-exponential2con} is satisfied. The proof is concluded by item (ii) in Theorem \ref{thmTV}.
\end{proof}

\begin{coro}\label{corodiffusion}
Under the conditions in Theorem \ref{Th-nondegenerate}, there exists a unique invariant measure family $\{\mu_{t}\}_{t\in\mathbb{R}}$ such that $\mu_{t}$ is almost periodic in $t$.
\end{coro}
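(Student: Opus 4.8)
The plan is to show that the unique invariant measure family $\{\mu_t\}_{t\in\mathbb R}$ obtained in Theorem \ref{Th-nondegenerate} is almost periodic in $t$ in the sense of Definition \ref{defalmostperiodic}, where $\mathcal X_1$ is a singleton and $\mathcal X_2=\mathcal P(\mathbb R^n)$ equipped with a metric compatible with the total variation norm (or the weak topology restricted to the tight family $\{\mu_t\}$). First I would record the contractive estimate \eqref{eqTV}: for any $\tau$, $t\ge 0$ and $x$ with $V(x)<\infty$,
\begin{equation*}
\|P(\tau-t,x,\tau,\cdot)-\mu_\tau\|_{TV}\le \tilde M\alpha^t(1+V(x)).
\end{equation*}
This says $\mu_\tau$ is the exponentially fast limit of $P(\tau-t,x,\tau,\cdot)$ as $t\to\infty$, uniformly in $\tau$. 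So it suffices to produce, for each $\epsilon>0$, a relatively dense set of translation numbers $s$ such that $\|P(\tau+s-t,x,\tau+s,\cdot)-P(\tau-t,x,\tau,\cdot)\|_{TV}$ is small for all large $t$, uniformly in $\tau$; combined with the displayed bound this forces $\|\mu_{\tau+s}-\mu_\tau\|_{TV}<\epsilon$ for all $\tau$.

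Next I would exploit the almost periodicity of the densities already established inside the proof of Theorem \ref{Th-nondegenerate}: there it is shown that for each fixed $t>0$ and $y\in\mathbb R^n$ the density $p(s-t,x,s,y)$ is almost periodic in $s$ uniformly for $x\in\mathbb R^n$, equivalently (via Proposition \ref{prop-almostperiodic}) that $P(s-t,x,s,\cdot)$ is almost periodic in $s$ uniformly on compacts in $x$. Fixing a base point $x_0$ with $V(x_0)<\infty$ and choosing $t=t(\epsilon)$ large enough that $\tilde M\alpha^{t}(1+V(x_0))<\epsilon/4$, I would apply the uniform almost periodicity of $s\mapsto P(s-t,x_0,s,\cdot)$ to get a relatively dense set of $s$ with $\sup_{\sigma\in\mathbb R}\|P(\sigma+s-t,x_0,\sigma+s,\cdot)-P(\sigma-t,x_0,\sigma,\cdot)\|_{TV}<\epsilon/2$. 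Here one must be a little careful: Definition \ref{defalmostperiodic} controls shifts in the first variable uniformly over compacta in the second, and since $x_0$ is a single point this is exactly what is available. Then for any $\tau$,
\begin{align*}
\|\mu_{\tau+s}-\mu_\tau\|_{TV}
\le\:& \|\mu_{\tau+s}-P(\tau+s-t,x_0,\tau+s,\cdot)\|_{TV}
+\|P(\tau+s-t,x_0,\tau+s,\cdot)-P(\tau-t,x_0,\tau,\cdot)\|_{TV}\\
&+\|P(\tau-t,x_0,\tau,\cdot)-\mu_\tau\|_{TV}
<\frac{\epsilon}{4}+\frac{\epsilon}{2}+\frac{\epsilon}{4}=\epsilon,
\end{align*}
which is the required estimate, with the same relatively dense set of translation numbers working for all $\tau$ simultaneously.

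Finally I would note that since $\tau\mapsto\mu_\tau$ is a uniform limit (by \eqref{eqTV}, uniformly in $\tau$) of the maps $\tau\mapsto P(\tau-t,x_0,\tau,\cdot)$, and each of these is continuous in $\tau$ because the transition probabilities of \eqref{SDE} depend continuously on the time endpoints (Remark 3.9 in \cite{Kh} and continuity of the density), the limit $\tau\mapsto\mu_\tau$ is continuous; together with the relatively dense set of approximate periods this gives almost periodicity of $\{\mu_t\}$ in the sense of Definition \ref{defalmostperiodic}. The main obstacle is the bookkeeping in the triangle-inequality step above: one needs the almost-periodicity translation numbers of $P(\cdot-t,x_0,\cdot,\cdot)$ to be chosen for a value of $t$ fixed in terms of $\epsilon$ only (not depending on $\tau$), and to verify that the convergence $P(\tau-t,x_0,\tau,\cdot)\to\mu_\tau$ from \eqref{eqTV} is genuinely uniform in $\tau$ — both of which hold, the first because $t$ is chosen before invoking almost periodicity and the second because the constants $\tilde M,\alpha$ in \eqref{eqTV} are independent of $\tau$.
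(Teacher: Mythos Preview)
Your proposal is correct and follows essentially the same route as the paper: both argue that $\mu_\tau$ is the uniform-in-$\tau$ limit (via \eqref{eqTV}) of the almost periodic maps $\tau\mapsto P(\tau-t,x_0,\tau,\cdot)$ and conclude almost periodicity from that; the paper simply condenses your explicit triangle-inequality step into a citation of Property~3 in \cite{LZ} (uniform limits of almost periodic functions are almost periodic). One small caution: the almost periodicity of $\tau\mapsto P(\tau-t,x_0,\tau,\cdot)$ established inside the proof of Theorem~\ref{Th-nondegenerate} comes from Theorem~3 in \cite{GS} and is in the weak topology, not in total variation, so the middle term of your triangle inequality---and hence the almost periodicity of $\mu_t$---should be read in a weak metric, as you already allow parenthetically.
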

\begin{proof}
By Theorem \ref{Th-nondegenerate}, there exists a unique invariant measure family $\{\mu_{t}\}_{t\in\mathbb{R}}$.
Notice that by Theorem \ref{Th-nondegenerate}, for any $\tau\in\mathbb{R}^{+}$ and $x\in\mathbb{R}^{n}$, $P(t-\tau,x,t,\cdot)$ is almost periodic in $t$ and by Remark \ref{Rem-n}, there exists a sequence $\{\tau_{n}\}\subset\mathbb{R}^{+}$ such that for any $x\in\mathbb{R}^{n}$ and $t\in\mathbb{R}$,
\begin{equation*}
\lim_{n\rightarrow\infty}P(t-\tau_{n},x,t,\cdot) = \mu_{t}.
\end{equation*}
It holds that the above convergence is uniformly with respect to $t$ by \eqref{eqTV}. The proof is concluded by Property $3$ given in \cite{LZ}.
\end{proof}

\subsection{Storage processes with the almost periodic release rule}\label{sec4.3}

Storage processes with the Brownian input have received much attention. Harrison and Taylor \cite{JA} investigated the optimal control of Brownian storage systems. Harrison and Williams \cite{JR} considered a Brownian model of a multiclass service station. In this subsection we study ergodic properties of storage processes with the Brownian input.
We investigate the storage model in which the storage level $X_{t}$ at time $t$ satisfies almost surely the equation
\begin{equation}\label{eq-Brownian}
X_{t} =  x+B_{t}-\int_{\tau}^{t}r(s,X_{s})ds, \quad X_{\tau} = x,\quad t \geq \tau,
\end{equation}
where $\{B_{t};t\in\mathbb{R}\}$ is a 1-dimensional Brownian motion and the release function $r(t,x)$ is nonnegative and almost periodic in $t$ uniformly for $x\in[0,\infty)$.

Let $r$ be of the class $C^{0,1}(\mathbb{R}\times[0,\infty),[0,\infty))$. Assume that there exists $\lambda>0$ such that for any $(t,x)\in\mathbb{R}\times[0,\infty)$, $r(t,x)\geq\lambda$. If there exists a Lyapunov function $V$ given in Definition \ref{Def-Lyaeqcon} for \eqref{eq-Brownian}, then there exists a unique invariant measure family $\{\mu_{t}\}_{t\in\mathbb{R}}$ for \eqref{eq-Brownian} with convergence in total variation distance by Theorem \ref{Th-nondegenerate}. Furthermore, $\mu_{t}$ is almost periodic in $t$ by Corollary \ref{corodiffusion}.

A specific release function $r(t,x)$ is considered in the following example.
\begin{example}\rm
Consider the storage model
\begin{equation}\label{eq-storage}
X_{t} =  x_{\tau}+B_{t}-\int_{\tau}^{t}(\sin\pi s+\cos\sqrt{2}\pi s+X_{s}+3)ds,\quad X_{\tau} = x_{\tau},\quad t \geq \tau,
\end{equation}
where $r(s,x)=\sin\pi s+\cos\sqrt{2}\pi s+x+3$ is almost periodic in $s$ uniformly for $x\in[0,\infty)$.

Take the Lyapunov function $V(x)=x,x\in[0,\infty)$, then it suffices to verify that $V$ satisfies the second condition in Definition \ref{Def-Lyaeqcon}. It holds that $(\mathcal{L}V)(t,x)=-(\sin\pi t+\cos\sqrt{2}\pi t+x+3)\leq-x$. Then there exist a unique invariant measure family $\{\mu_{t}\}_{t\in\mathbb{R}}$ for \eqref{eq-storage} satisfying $\int_{\mathcal X}V(x)\mu_t(dx)<+\infty$ for any $t\in\mathbb R$, constants $0<\alpha<1$ and $\tilde{M}>0$ such that
\begin{equation*}
\|P(\tau-t,x,\tau,\cdot)-\mu_{\tau}\|_{TV}
\leq
\tilde{M}\alpha^{t}(1+V(x)),
\end{equation*}
for any $\tau\in\mathbb{R}$, $t\geq 0$ and $x\in[0,\infty)$ by Theorem \ref{Th-nondegenerate}. Furthermore, $\mu_{t}$ is almost periodic in $t$ by Corollary \ref{corodiffusion}.

\begin{remark}\rm
We will study the existence, uniqueness and exponential ergodicity of invariant measure family for equation \eqref{SDE} with recurrent coefficients in our future work. It can be seen in \cite{CL2} and \cite{CL} for the notations and properties of recurrent functions.
\end{remark}

\end{example}

\section*{Acknowledgements}

The authors would like to thank the anonymous referees for their careful reading and valuable comments which greatly improved this paper. 
This work is supported by National Key R\&D Program of China (No. 2023YFA1009200), NSFC Grant 11925102, and Liaoning Revitalization Talents Program (Grant XLYC2202042).

\end{document}